\documentclass[11pt]{article}

\usepackage[utf8]{inputenc}
\usepackage[T1]{fontenc}
\usepackage{lmodern}
\usepackage[margin=1in]{geometry}
\usepackage{amsmath,amssymb,amsthm}
\usepackage{graphicx}
\usepackage{booktabs}
\usepackage{hyperref}
\usepackage{xcolor}
\usepackage{authblk}         % nicer author/affiliation blocks

\hypersetup{
	colorlinks=true,
	linkcolor=blue,
	citecolor=blue,
	urlcolor=blue,
}

\newtheorem{theorem}{Theorem}
\newtheorem{lemma}[theorem]{Lemma}

\newtheorem{corollary}[theorem]{Corollary}
\theoremstyle{definition}
\newtheorem{definition}{Definition}
\newtheorem{example}{Example}
\theoremstyle{remark}
\newtheorem{remark}{Remark}

\usepackage{graphicx}%
\usepackage{multirow}%
\usepackage{amsmath,amssymb,amsfonts}%
\usepackage{amsthm}%
\usepackage{mathrsfs}%
\usepackage[title]{appendix}%
\usepackage{xcolor}%
\usepackage{textcomp}%
\usepackage{manyfoot}%
\usepackage{booktabs}%
\usepackage{algorithm}%
\usepackage{algorithmicx}%
\usepackage{algpseudocode}%
\usepackage{listings}%

\def\disp{\displaystyle}

\def\tto{\;{\lower 1pt \hbox{$\rightarrow$}}\kern -10pt
	\hbox{\raise 2pt \hbox{$\rightarrow$}}\;}

\def\epsilon{\varepsilon}

\def\R{\Bbb R}

\def\N{\Bbb N}

\title{Dynamical Systems and Nonconvex Quadratic Programs}

\author[1]{Nguyen Nang Thieu}
\affil[1]{Department of Optimization and Scientific Computing, Institute of Mathematics, Vietnam Academy of Science and Technology, 18 Hoang Quoc Viet, Hanoi, 10307, Vietnam\\ \texttt{nnthieu@math.ac.vn}}

\date{\empty}
\begin{document}
\maketitle

\begin{abstract}
This paper studies a dynamical system approach for solving indefinite quadratic programming problems subject to linear constraints. We investigate the convergence of the trajectory generated by the system to a Karush-Kuhn-Tucker point of the quadratic programs. In addition, we derive an estimate for the distance between the trajectory and a solution of the problem. We further prove that the objective value is decreasing along the trajectory. An illustrative example and a numerical test are presented to demonstrate the behavior and performance of the proposed method.
\end{abstract}

\noindent\textbf{Keywords:} Nonconvex qua\-dra\-tic program,  DCA, dynamical system,  convergence of the trajectory, variational inequality

\maketitle

\section{Introduction}\label{Sect-1}

In this paper, we consider the \textit{indefinite quadratic programming problem under linear constrains} (IQPs) given as follows
\begin{eqnarray}\label{QP problem}
	\min\Big\{f(x)=\dfrac{1}{2}x^\top Qx+q^\top x \mid Ax\geq b\Big\}, \end{eqnarray}
where $Q\in\mathbb R^{n\times n}$ and $A\in\mathbb  R^{m\times n}$ are given matrices, $Q$  is symmetric, $q\in\mathbb  R^n$ and $b\in\mathbb  R^m$ are given vectors, and the superscript $^\top $ signifies the matrix transposition. Throughout the paper, we assume that the constraint set $C:=\big\{x\in\mathbb R^n\mid Ax\geq b\big\}$ of~\eqref{QP problem} is nonempty. 

This class of problems naturally emerge in nonlinear programming frameworks, especially within sequential quadratic programming techniques, where each iteration requires the solution of a quadratic subproblem. Classical schemes of this type include the methods developed by Wilson, Pang, and Maratos-Mayne-Polak, together with several globalized variants; see~\cite[Section~2.9]{Polak_1997}. A broad overview of theoretical developments in indefinite quadratic optimization is presented in~\cite{Bomze_1998}. 

Indefinite quadratic models appear in numerous real-world applications. In operations research and economics, they have been used to model production planning, agricultural systems, and policy-related decision processes~\cite{Gupta_1995}. In recent years, the scope of applications has expanded considerably due to developments in data science and engineering. In particular, quadratic optimization formulations arise in portfolio selection, image processing, support vector machine training, and several machine learning tasks~\cite{Akoa_2008,Cornuejols_2018,Liu et al_2017a,McCarl et al_1977,Xu et al_2017,Xue et al_2019}.

The mathematical properties of indefinite quadratic programming have been extensively studied in the literature. In particular, these studies address topics such as the existence of optimal solutions, characterization of stationary points, optimality conditions, sensitivity analysis, and stability of solution mappings. Detailed discussions of these aspects can be found in~\cite{lty2012}. Since indefinite quadratic programming is generally NP-hard~\cite{Pardalos_Vavasis_1991}, the development of efficient algorithms remains a challenging topic, particularly for large-scale data. Most currently available methods are designed to compute stationary solutions or local minimizers rather than globally optimal points. Some well known numerical approaches include branch-and-bound techniques, decomposition methods, and local descent procedures; see~\cite{Bomze_Danninger_1994,Cambini_Sodini_2005,PhamDinh_LeThi_2,PhamDinh_LeThi98,PhamDinh_LeThi_3,PhamDinh_LeThi_Akoa,Ye89,Ye92,Ye97}. 

\medskip
Besides discrete methods, some continuous methods are proposed to solve optimization problems. Antipin~\cite{Antipin_94} introduced a projected dynamical system for minimizing a continuously differentiable function over a closed convex set by combining the gradient of the objective function with the metric projection onto the feasible set. Convergence properties for both first-order and second-order systems were established therein. Continuous dynamical systems have also been extensively applied to variational inequalities. In particular, Cavazzuti, Pappalardo, and Passacantando~\cite{CPP_02} considered a projected dynamical system associated with a strongly monotone and Lipschitz continuous operator. Additional results on dynamical approaches to optimization problems, variational inequalities, and equilibrium problems can be found in~\cite{AGR_00,APR_14,BCPP_19,DN_93,HSV_18,Hai_2022,NZ_96,PP_02,VTV_22,Vuong_2021,VS_20}. 

In~\cite{PhamDinh_LeThi_Akoa}, Pham Dinh and coauthors developed several decomposition-based algorithms for solving indefinite quadratic programming problems, including projection-type and proximal-type DC schemes. These methods rely on DC programming and DCA (Difference-of-Convex functions Algorithm) techniques introduced by Pham Dinh Tao and Le Thi Hoai An~\cite{PhamDinh_LeThi_AMV97,PhamDinh_LeThi98}; see also~\cite{LeThi_PhamDinh_AOR05,PhamDinh_LeThi_4}. Their theoretical properties and numerical performance were subsequently investigated in terms of convergence behavior, robustness, and convergence speed in~\cite{ATY2,CLY_2024,Tuan_JMAA2015,Tuan_JOTA2015}.

To the best of our knowledge, Pappalardo, Thieu, and Yen~\cite{pty_2025} were the first to propose a DC-based dynamical approach for solving indefinite quadratic programming problems, using the DC decomposition introduced in~\cite{CLY_2024}. They showed that the strong pseudomonotonicity assumption adopted in several previous works is rather restrictive, even for affine operators defined on compact polyhedral convex sets. Moreover, they established convergence of the trajectories of the dynamical system to Karush-Kuhn-Tucker points in the one-dimensional case on the interval $[-1,1]$. In~\cite{pty_2025}, the authors proposed several open question related to the convergence of trajectories to a KKT point in more general cases for indefinite quadratic programming problems. Very recently, Niu~\cite{N_2026} proposed a continuous-time scheme for damped DCA and proved convergence to a critical point for bounded trajectories under a Kurdyka-\L{}ojasiewicz hypothesis.

In this paper, we consider one of the dynamical system schemes proposed in~\cite{pty_2025} and apply it to indefinite quadratic programming problems under linear constraints, where the feasible set may be unbounded. One difference between our approach and the method proposed in~\cite{pty_2025}, as well as other DC decomposition-based methods~\cite{PhamDinh_LeThi_AMV97,PhamDinh_LeThi98} for IQPs, is that our approach does not require the decomposition parameter $\rho$ to be strictly greater than the largest eigenvalue of $Q$. Consequently, there is no need to compute the largest eigenvalue of $Q$ or an upper bound on its eigenvalues. We establish the asymptotic convergence of the trajectory
generated by the dynamical system to the Karush-Kuhn-Tucker set of~\eqref{QP problem}. This partially answers Question~2 in~\cite{pty_2025} for the case where the constraints are affine. In addition, we obtain an estimate for the distance between the trajectory and a solution of~\eqref{QP problem}. The value of the objective function is decreasing along the trajectory generated by the dynamical system. In some sense, this property inherits from the known feature of DCA sequences: the value of the objective function decreases along any DCA sequence (see~\cite{PhamDinh_LeThi_AMV97,PhamDinh_LeThi98}).

The paper is organized as follows. In Section~\ref{Sect-2}, we provide some preliminaries on IQPs and a continuous scheme for solving them. Several results concerning this scheme, including a convergence theorem and properties of the trajectory, are established in Section~\ref{Sect-3}. Section~\ref{Sect-4} presents two examples to illustrate the convergence of the generated trajectory and compare the numerical performance of the continuous scheme with that of the DCA method. Finally, concluding remarks and directions for further research are given in Section~\ref{Sect-5}.
\section{Preliminaries}\label{Sect-2}

In this section, we recall some notions and results that will be used throughout the paper. The notation $[a,b]$ (resp., $(a,b)$) stands for a closed interval (resp., an open interval) in the real line $\mathbb{R}$. The set of positive integers and the set of nonnegative real numbers are denoted respectively by $\mathbb{N}$  and $\mathbb{R}_+$. Let $\R^n$ be the $n$-dimensional Euclidean space equipped with the norm $ \Vert \cdot \Vert $ and the scalar product~$\langle \cdot,\cdot \rangle$. The \textit{metric projection} of a point $x\in \R^n$ onto a set $C$ is defined by $$P_C(x) = \big\{y\in C \mid \|y-x\| \inf_{z\in C}\|z-x\|\big\}.$$ 

\begin{definition} {\rm A point $x\in\mathbb R^n$ is said to be a {\it Karush-Kuhn-Tucker point} (a {\it KKT point}) of~\eqref{QP problem} if there exists a multiplier $\lambda\in\mathbb R^m$  such that
		\begin{eqnarray}\label{KKT_Point_Set}
			\begin{cases}Qx+q-A^T\lambda=0,\\ Ax\geq b,\ \; \lambda\geq 0,\ \; \lambda^T(Ax-b)=0.\end{cases}
		\end{eqnarray} 
		The set of all KKT points of~\eqref{QP problem} is denoted by $C_*$.
} \end{definition}

If $\bar x$ is a local solution of~\eqref{QP problem}, then $\bar x$ is a KKT point of the problem, that is, $\bar x\in C_*$ (see~\cite[Theorem~3.3]{lty05}). It is well known that a point $x\in C_*$ if and only if $x$ is a solution of the following \textit{affine variational inequality}
\begin{eqnarray}\label{AVI} x\in C,\quad \langle Qx+q,u-x\rangle\geq 0\ \; \forall u\in C;
	\end{eqnarray}
	see, e.g.,~\cite[Theorem~5.3]{lty05} and~\cite{lty2012}.

The next important lemma describes the special structure of the KKT set of~\eqref{QP problem}.

		\begin{lemma}\label{lem:KKT_components} {\rm (\cite[Lemma~3.1 and its proof]{LT_1992}; see also \cite[Lemma 2.2]{Tuan_JMAA2015})}\ \, The KKT set of~\eqref{QP problem} has finitely many connected components.	Let $C_1, C_2, \cdots, C_r$ denote the connected components of $C_*$. Then, the following properties are valid.
			\begin{itemize}
				\item[{\rm (a)}] Each component $C_i$ is the union of finitely many polyhedral convex sets.
				\item[{\rm (b)}] The closed sets $C_i$, $i=1,\ldots r$, are properly separated each from others, that is, there exists $\delta>0$ such that if  $i\neq j$ then
				\begin{equation}\label{positive_excess}
					\inf\limits_{x\in C_i }d(x,C_j)\geq \delta.
				\end{equation}
				\item[{\rm (c)}] The objective function of~\eqref{QP problem} is constant on each component $C_i$.
			\end{itemize}
	\end{lemma}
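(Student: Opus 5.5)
We prove Lemma~\ref{lem:KKT_components} by decomposing $C_*$ according to active index sets. For each $I\subseteq\{1,\dots,m\}$ we set
\[
F_I:=\Big\{x\in\mathbb R^n \;\Big|\; \exists\lambda\in\mathbb R^m:\ Qx+q-A^\top\lambda=0,\ \lambda\ge 0,\ \lambda_j=0\ (j\notin I),\ A_Ix=b_I,\ A_jx\ge b_j\ (j\notin I)\Big\}.
\]
Each $F_I$ is the projection onto the $x$-space of a polyhedron in $(x,\lambda)$-space, since it is cut out by finitely many linear equalities and inequalities. Hence $F_I$ is a polyhedral convex set and in particular closed. Complementarity $\lambda^\top(Ax-b)=0$ holds in $F_I$ because $\lambda_j=0$ off $I$ and $A_jx=b_j$ on $I$. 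Conversely, every KKT pair $(x,\lambda)$ lies in $F_I$ with $I=\{j:\lambda_j>0\}$. Therefore $C_*=\bigcup_I F_I$ is a finite union of at most $2^m$ closed convex sets.

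Next we form the graph $G$ whose vertices are the nonempty $F_I$, with an edge between $F_I$ and $F_J$ whenever $F_I\cap F_J\neq\emptyset$. Each $F_I$ is convex and hence connected. It follows that the union over a connected component of $G$ is a connected set, and that unions over different components of $G$ are disjoint. These unions are closed, because each is a finite union of closed sets. Finitely many pairwise disjoint closed sets whose union is $C_*$ must be exactly the connected components of $C_*$. This gives finiteness of the number of components and proves (a).

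For (c), we first show that $f$ is constant on each $F_I$; constancy on $C_i$ then follows by chaining through the edges of $G$. Take $x,y\in F_I$ with multipliers $\lambda,\mu$, and put $v=y-x$. Then $A_Iv=0$, and $\lambda,\mu$ vanish off $I$. Hence
\[
\langle Qx+q,v\rangle=\langle\lambda,Av\rangle=0, \qquad \langle Qy+q,v\rangle=0.
\]
Subtracting gives $v^\top Qv=0$. Consequently
\[
f(y)-f(x)=\langle Qx+q,v\rangle+\tfrac12 v^\top Qv=0.
\]

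For (b), the main obstacle is unboundedness. Two disjoint closed sets can have distance zero, for example two asymptotically approaching curves, so we must use polyhedrality. We claim that if $P,P'$ are disjoint polyhedral convex sets, then $d(P,P')>0$. To see this, note that $P-P'$ is again a polyhedral convex set, being the image of the polyhedron $P\times P'$ under a linear map, so it is closed. Since $0\notin P-P'$, we get $d(P,P')=d(0,P-P')>0$. Every $C_i$ is a finite union of the sets $F_I$, and for $i\neq j$ each piece of $C_i$ is disjoint from each piece of $C_j$. Taking $\delta$ to be the minimum of these finitely many positive distances gives~\eqref{positive_excess}.
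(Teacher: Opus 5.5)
Your proof is correct and follows essentially the argument behind the result that the paper cites without proof (Luo--Tseng's Lemma~3.1 and its proof, as restated in Tuan's Lemma~2.2): you split $C_*$ by active index sets into finitely many polyhedra, show $f$ is constant on each piece via $\langle Qx+q,y-x\rangle=\langle Qy+q,y-x\rangle=0$, and obtain the separation from the positive distance between disjoint polyhedra. The one sentence that depends on its context is where you identify the finitely many disjoint closed unions with the connected components; it is valid because you had just shown that each of these unions is connected.
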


We will need the following local error bound, which is also valid for the solution sets of affine variational inequalities.

 \begin{lemma}\label{lem:dist_kkt_set}{\rm (See~\cite[Lemma 2.1]{Tuan_JMAA2015}, cf.~\cite[Lemma~3.1]{LT_1992})} \textit{For any $\rho>0$, if $C_*\neq \emptyset$, then there exist scalars $\varepsilon>0$ and $\ell>0$ such that
 	\begin{equation}\label{ERB}
 		d(x,C_*)\leq \ell \left\|P_C\Big(x-\frac{1}{\rho}(Qx+q)\Big)-x\right\|
 	\end{equation}
 	for all $x\in C$ with
 	\begin{equation}\label{locality}
 		\left\|P_C\Big(x-\frac{1}{\rho}(Qx+q)\Big)-x\right\|<\varepsilon.
 	\end{equation}}
 \end{lemma}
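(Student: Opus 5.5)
The plan is to derive the bound from two ingredients: Robinson's upper Lipschitz property of polyhedral multifunctions, and the fact that the fixed points of the natural map are exactly $C_*$.

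\textbf{Step 1: fixed points of the natural map.} Fix $\rho>0$ and set
$$r(x):=P_C\Big(x-\tfrac{1}{\rho}(Qx+q)\Big)-x .$$
By the characterization of the metric projection onto the closed convex set $C$, $r(x)=0$ if and only if $x\in C$ and
$$\langle Qx+q,u-x\rangle\ge 0 \quad \forall u\in C .$$
This is exactly \eqref{AVI}, so $r(x)=0$ if and only if $x\in C_*$.

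\textbf{Step 2: perturbed problem and its solution map.} For $e\in\mathbb R^n$, consider the problem: find $x$ with $r(x)=e$. Writing $y=x+e$, the condition $y=P_C\big(x-\frac1\rho(Qx+q)\big)$ is equivalent to
$$y\in C,\qquad \Big\langle y-x+\tfrac1\rho(Qx+q),\,u-y\Big\rangle\ge 0\quad\forall u\in C .$$
Since $C$ is polyhedral, $y\in C$ is described by finitely many linear inequalities. By KKT for this projection, the constraint is a polyhedral-multiplier system: there is $\mu\ge0$ with
$$y-x+\tfrac1\rho(Qx+q)=A^\top\mu,\qquad \mu^\top(Ay-b)=0 .$$
The complementarity splits into finitely many active-index choices. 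For each choice, the set of admissible $(x,y,\mu,e)$ is a polyhedron. Hence the multifunction
$$S(e):=\{x : r(x)=e\}$$
has a graph that is a finite union of polyhedra, i.e.\ $S$ is a polyhedral multifunction.

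\textbf{Step 3: Robinson's theorem.} By Robinson's theorem, a polyhedral multifunction is locally upper Lipschitz at every point. Apply this at $e=0$, where $S(0)=C_*\neq\emptyset$. It gives $\varepsilon>0$ and $\ell>0$ such that
$$S(e)\subset C_*+\ell\|e\|\,\mathbb B \quad\text{whenever } \|e\|<\varepsilon .$$

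\textbf{Step 4: conclusion.} Take $x\in C$ with $\|r(x)\|<\varepsilon$. Then $x\in S(r(x))$, so
$$d(x,C_*)\le \ell\,\|r(x)\|,$$
which is \eqref{ERB} under the locality condition \eqref{locality}.

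The main obstacle is Step 2: verifying carefully that the graph of $S$ is a finite union of polyhedra. The complementarity system must be split by active sets, and one must confirm that each resulting piece is defined by linear equalities and inequalities in $(x,y,\mu,e)$ jointly. This works because $Q$ enters linearly and $\rho$ is fixed. Once that is in place, Robinson's theorem gives exactly the local (not global) error bound stated, which is why locality \eqref{locality} is needed when $C$ is unbounded.
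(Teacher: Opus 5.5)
Your proof is correct. Note that the paper gives no proof of this lemma at all: it imports the bound from Tuan (2015, Lemma 2.1), which rests on the Luo--Tseng error bound for affine variational inequalities. The argument in that literature works directly with the finitely many active-set patterns of the projection onto $C$, and applies Hoffman's error bound to the linear system attached to each pattern. You instead package the same piecewise-polyhedral structure into Robinson's upper Lipschitz theorem for polyhedral multifunctions. Robinson's theorem is itself obtained from Hoffman-type Lipschitz estimates on each polyhedral piece, so the mathematical core is the same. Your route is shorter and more conceptual, while the direct route is self-contained and keeps the constants traceable.

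Your route also gives slightly more than stated. The hypothesis $x\in C$ is never used, so the bound holds for every $x\in\mathbb R^n$ with $\|r(x)\|<\varepsilon$.

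One step deserves to be written out explicitly. For each active set $J$ you obtain a polyhedron in $(e,x,\mu)$-space. Since the multiplier $\mu$ need not be unique, the graph of $S$ is the union over $J$ of the projections of these polyhedra onto the $(e,x)$-coordinates. To conclude that $S$ is a polyhedral multifunction, you need the fact that projections of polyhedra are polyhedra (Fourier--Motzkin). Equivalently, you can observe that $P_C$, and hence $r$, is piecewise affine with polyhedral pieces, so the graph of $S=r^{-1}$ is a finite union of polyhedra.
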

 
To solve~\eqref{QP problem} by DCA, one uses the decomposition  $f(x)=f_1(x)-f_2(x)$ for the objective function $f$ with
\begin{eqnarray*}
	f_1(x)
	=
	\Big[\frac{1}{2}x^\top Q_1x+q^\top x\Big]
	+\delta_C(x)
	\quad {\rm and} \quad
	f_2(x)
	=
	\frac{1}{2}x^\top Q_2x,
\end{eqnarray*}
where $Q_1,Q_2\in \mathbb R^{n\times n}$ are symmetric positive definite matrices satisfying $Q=Q_1-Q_2$. Here, $\delta_C$ denotes the \textit{indicator function} of $C$, that is,
\[
\delta_C(x)=
\begin{cases}
	0 & \text{if } x\in C,\\
	\infty & \text{if } x\notin C.
\end{cases}
\]

Let $\lambda_{\rm min}(Q)$ and $\lambda_{\rm max}(Q)$ be, respectively, the smallest eigenvalue and the largest eigenvalue of $Q$. Following Pham Dinh et al.~\cite{PhamDinh_LeThi98,PhamDinh_LeThi_Akoa}, one may choose the decomposition matrices in one of the following ways:
\begin{itemize}
	\item[(a)]
	$Q_1=\rho I$ and $Q_2=\rho I-Q$, where $\rho >0$ is a real number such that $\rho>\lambda_{\rm max}(Q)$;
	
	\item[(b)]
	$Q_1=Q+\rho I$ and $Q_2=\rho I$, where $\rho >0$ is a real number satisfying $\rho>-\lambda_{\rm min}(Q)$.
\end{itemize}

Since upper and lower bounds for the eigenvalues of a symmetric matrix can be computed efficiently (see~\cite[estimate~(a) or estimate~(b), p.~418]{Stoer_Bulirsch_1980}), one can readily determine a parameter $\rho>0$ satisfying the above conditions.

\medskip

In this paper, we focus on the decomposition given in~(a). This leads to the explicit iteration scheme
\begin{equation}\label{Iteration_A}
	x^{k+1}
	:=
	P_C\Bigl(
	x^k-\frac{1}{\rho}(Qx^k+q)
	\Bigr),
	\qquad
	k=0,1,2,\dots,
\end{equation}
where $x^0\in C$ is an initial point (see, e.g.,~\cite{PhamDinh_LeThi_Akoa}). The continuous dynamical system associated with the iteration scheme~\eqref{Iteration_A}, which was introduced in~\cite{pty_2025}, is as follows
\begin{equation}\label{dynamic_sys_A}
	\begin{cases}
		\dot{x}(t)
		=
		\dfrac{1}{\eta}
		\left[
		P_C\left(
		x(t)-\dfrac{1}{\rho}\big(Qx(t)+q\big)
		\right)
		-x(t)
		\right],
		\quad t\geq 0,
		\\[2ex]
		x(0)=x^0,
	\end{cases}
\end{equation}
where $\eta$ and $\rho$ are fixed positive constants\footnote{The original version of the dynamical system~\eqref{dynamic_sys_A} in~\cite{pty_2025} requires that~$\rho>\lambda_{\rm max}(Q)$. But in the several subsequent results, the assumption can be omitted. Hence, we prefer to formulate the dynamical scheme without the condition~$\rho>\lambda_{\rm max}(Q)$.}.

The next theorem addresses the existence and uniqueness of the trajectory of system~\eqref{dynamic_sys_A}.

\begin{theorem}\label{global_sol_A} {\rm (See~\cite[Theorem~2.1]{pty_2025})} For any $x^0\in\mathbb R^n$, $\rho>0$, and $\eta>0$, there exists a unique $C^1$ function $x:\mathbb{R}_+\to\mathbb R^n$ satisfying the differential equation and the initial condition in~\eqref{dynamic_sys_A}.
\end{theorem}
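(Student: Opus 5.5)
The plan is to read~\eqref{dynamic_sys_A} as an autonomous ODE $\dot x = F(x)$ with
\[
F(x):=\frac{1}{\eta}\Big[P_C\Big(x-\frac{1}{\rho}(Qx+q)\Big)-x\Big],
\]
and to apply the Picard--Lindel\"of theorem in its global form for globally Lipschitz right-hand sides.

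\emph{Step 1: $F$ is globally Lipschitz.} The constraint set $C$ is nonempty, closed and convex, being a polyhedron. Hence $P_C$ is single-valued on $\mathbb R^n$ and nonexpansive:
\[
\|P_C(u)-P_C(v)\|\le\|u-v\|.
\]
The inner map $x\mapsto x-\frac{1}{\rho}(Qx+q)$ is affine, with Lipschitz constant $\|I-\frac{1}{\rho}Q\|$. It follows that
\[
\|F(x)-F(y)\|\le \frac{1}{\eta}\Big(\Big\|I-\frac{1}{\rho}Q\Big\|+1\Big)\|x-y\| =: L\|x-y\|
\]
for all $x,y\in\mathbb R^n$. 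No condition on $\rho$ beyond $\rho>0$ is used here.

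\emph{Step 2: existence and uniqueness on every bounded interval.} Since $F$ is continuous and globally $L$-Lipschitz, the Cauchy--Lipschitz theorem gives a unique $C^1$ solution on all of $\mathbb R_+$. The standard argument runs as follows.
\begin{itemize}
\item The solution is a fixed point of the Picard operator $(Tx)(t)=x^0+\int_0^t F(x(s))\,ds$.
\item Fix $T_0>0$. On $C([0,T_0];\mathbb R^n)$ with the Bielecki norm $\sup_t e^{-2Lt}\|x(t)\|$, the operator $T$ is a contraction.
\item Banach's fixed point theorem then yields a unique continuous fixed point on $[0,T_0]$.
\item Because $F$ is continuous, this fixed point is $C^1$ and satisfies~\eqref{dynamic_sys_A}.
\end{itemize}

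\emph{Step 3: uniqueness in the stated class and gluing.} Any $C^1$ solution satisfies the integral equation, so uniqueness on each $[0,T_0]$ also follows from Gronwall's inequality applied to the difference of two solutions. Solutions on nested intervals therefore agree, and they glue to a single $C^1$ function on $\mathbb R_+$. Global Lipschitz continuity rules out finite-time blow-up, since $\|x(t)\|$ grows at most exponentially by Gronwall.

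\emph{Main obstacle.} The only point needing care is Step 1, namely that $P_C$ is well defined and nonexpansive. This uses that the projection onto a nonempty closed convex set is characterized by the variational inequality
\[
\langle u-P_C(u),\, z-P_C(u)\rangle\le 0 \quad \forall z\in C.
\]
Writing this inequality for $u$ with $z=P_C(v)$, and for $v$ with $z=P_C(u)$, then adding the two, gives firm nonexpansiveness and hence nonexpansiveness. Everything after that is routine ODE theory.
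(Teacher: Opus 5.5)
Your proposal is correct and follows the same route the paper relies on through \cite[Theorem~2.1]{pty_2025}: show that $F$ is globally Lipschitz on $\mathbb R^n$ using the nonexpansiveness of $P_C$, then apply the global Cauchy--Lipschitz theorem, with no condition on $\rho$ beyond $\rho>0$. Your constant $\frac{1}{\eta}\big(\|I-\frac{1}{\rho}Q\|+1\big)$ is slightly sharper than the constant $\frac{1}{\eta}\big(\frac{1}{\rho}\|Q\|+2\big)$ quoted in the proof of Theorem~\ref{thm1}, but either one suffices.
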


The set $C$ is \textit{flow invariant} with respect to  the trajectories $x(\cdot)$ generated by~\eqref{dynamic_sys_A}; that is, if $x^0\in C$ then $x(t)\in C$ for all $t\geq 0$. The precise formulation of the result is as follows.

\begin{theorem}\label{flow_invariant_A} {\rm (See~\cite[Theorem~2.3]{pty_2025})} For any $x^0\in C$, $\rho>0$, and $\eta>0$, the whole trajectory $x(\cdot)$ of~\eqref{dynamic_sys_A} is contained in $C$, that is $x(t)\in C$ for all $t\geq 0$. 
\end{theorem}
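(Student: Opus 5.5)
The plan is to write the solution of~\eqref{dynamic_sys_A} in variation-of-constants form and argue that it is a limit of convex combinations of points of $C$. Put $T(x):=P_C\big(x-\frac{1}{\rho}(Qx+q)\big)$, so the system reads $\eta\dot x=T(x)-x$.

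Multiplying by the integrating factor $e^{t/\eta}$ gives $\frac{d}{dt}\big(e^{t/\eta}x(t)\big)=\frac{1}{\eta}e^{t/\eta}T(x(t))$. Integrating from $0$ to $t$ yields
\begin{equation*}
x(t)=e^{-t/\eta}x^0+\int_0^t \frac{1}{\eta}e^{-(t-s)/\eta}\,T(x(s))\,ds .
\end{equation*}
The weights $e^{-t/\eta}$ and $\frac{1}{\eta}e^{-(t-s)/\eta}$ on $[0,t]$ are nonnegative. They sum to one, because $e^{-t/\eta}+\int_0^t\frac1\eta e^{-(t-s)/\eta}ds=e^{-t/\eta}+(1-e^{-t/\eta})=1$.

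Fix $t>0$ and assume first that $t<\infty$ is arbitrary. Then $x(t)$ is the barycenter of the probability measure $\mu_t:=e^{-t/\eta}\delta_{x^0}+\frac1\eta e^{-(t-s)/\eta}ds\circ T(x(\cdot))^{-1}$. This measure is supported in $C$, since $x^0\in C$ and $T$ takes values in $C$ by definition of the projection. The function $s\mapsto T(x(s))$ is continuous, because $P_C$ is nonexpansive and $x$ is $C^1$ by Theorem~\ref{global_sol_A}.

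The set $C$ is closed and convex, so the barycenter of a probability measure supported in $C$ lies in $C$. One can avoid measure theory by approximating the integral with Riemann sums. Take a partition $0=s_0<\dots<s_N=t$. The expression $e^{-t/\eta}x^0+\sum_k w_k T(x(s_k))$, with $w_k=\int_{s_{k-1}}^{s_k}\frac1\eta e^{-(t-s)/\eta}ds$, is an exact convex combination of points of $C$, so it lies in $C$. As the mesh tends to zero these sums converge to $x(t)$ by uniform continuity on $[0,t]$, and closedness of $C$ gives $x(t)\in C$.

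An alternative route is via separation. If $x(t)\notin C$, pick $a$ and $\beta$ with $\langle a,y\rangle\ge\beta$ for all $y\in C$ and $\langle a,x(t)\rangle<\beta$. Pairing the integral formula with $a$ gives $\langle a,x(t)\rangle\ge\beta$, a contradiction. This version is the cleanest, and it is the one I would write.

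The only delicate point is justifying the variation-of-constants formula and the continuity of the integrand. Both follow from $x$ being $C^1$ and $T$ being Lipschitz, so I expect no real obstacle.
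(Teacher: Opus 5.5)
Your argument is correct. The paper gives no proof of this statement (it is simply quoted from \cite[Theorem~2.3]{pty_2025}), so your proof works as a self-contained replacement.

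\textbf{Checking the steps.} The integrating-factor identity, the normalization of the weights and the separation step are all sound. There is no circularity: $T$ maps all of $\mathbb{R}^n$ into $C$, so $T(x(s))\in C$ whether or not $x(s)\in C$. Because $C=\{x\mid Ax\ge b\}$ is polyhedral, you can even drop separation. Pairing the integral formula with each row $a_i^\top$ of $A$ gives $a_i^\top x(t)\ge e^{-t/\eta}b_i+(1-e^{-t/\eta})b_i=b_i$ directly. The sentence about ``$t<\infty$ arbitrary'' and the push-forward notation for $\mu_t$ are muddled, but you do not need the measure-theoretic version at all.

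\textbf{Comparison with the tangency route.} The other standard route is a Nagumo-type tangency argument. For $x\in C$ and $0<h\le\eta$, the Euler step $x+\frac{h}{\eta}\big(T(x)-x\big)=\big(1-\frac{h}{\eta}\big)x+\frac{h}{\eta}T(x)$ is a convex combination of points of $C$. Hence the tangential condition holds trivially, and a flow-invariance theorem for locally Lipschitz fields on closed sets finishes the proof.
\begin{itemize}
\item That route needs only $T(C)\subset C$, but it relies on a nontrivial invariance theorem.
\item Your route needs the stronger property $T(\mathbb{R}^n)\subset C$, which holds here, plus continuity of $s\mapsto T(x(s))$. It uses nothing beyond calculus and convexity.
\item Your route also yields more. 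For $t>0$, write $x(t)=e^{-t/\eta}x^0+(1-e^{-t/\eta})m(t)$, where $m(t)\in C$ by the same barycenter argument. This gives $d(x(t),C)\le e^{-t/\eta}d(x^0,C)$ for every $x^0\in\mathbb{R}^n$, so $C$ is exponentially attracting as well as invariant.
\end{itemize}
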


\section{Main Results}\label{Sect-3}

Our first main result shows that, under a mild assumption, the trajectory generated by~\eqref{dynamic_sys_A} converges asymptotically to the KKT set of~\eqref{QP problem}.

\begin{theorem}\label{thm1}
Let $\eta$ and $\rho$ be some positive constants. If the objective function $f(x)$ of problem~\eqref{QP problem} is bounded below on~$C$ then, for any $x^0\in C$, the unique solution $x(t)$ of~\eqref{dynamic_sys_A} has the property 
\begin{equation}\label{convergence_1}\lim\limits_{t\to +\infty} d(x(t),C_*) = 0.
\end{equation} Moreover, there exists a connected component $\Omega$ of $C_*$, which is a union of finitely many polyhedral convex sets (hence it is closed) such that 
\begin{equation}\label{convergence_1a}\lim\limits_{t\to +\infty} d(x(t),\Omega) = 0.
\end{equation} In particular, if problem~\eqref{QP problem} has finitely many KKT points, then there is a KKT point $\bar x$ of~\eqref{QP problem} such that $\lim\limits_{t\to +\infty} x(t)=\bar x$. 
\end{theorem}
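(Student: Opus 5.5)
Write $y(t):=P_C\big(x(t)-\frac{1}{\rho}(Qx(t)+q)\big)$, so that $\dot x(t)=\frac{1}{\eta}(y(t)-x(t))$. The plan is a Lyapunov argument with $f$ itself, followed by the error bound of Lemma~\ref{lem:dist_kkt_set} and the separation property of Lemma~\ref{lem:KKT_components}.

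\textbf{Step 1: $f$ decreases along the trajectory.} By Theorem~\ref{flow_invariant_A}, $x(t)\in C$ for all $t\ge0$. The projection characterization of $y(t)$, tested at the point $u=x(t)\in C$, gives
\[
\big\langle x(t)-\tfrac1\rho(Qx(t)+q)-y(t),\,x(t)-y(t)\big\rangle\le 0,
\]
that is,
\[
\langle Qx(t)+q,\,y(t)-x(t)\rangle\le -\rho\|y(t)-x(t)\|^2 .
\]
Hence
\[
\frac{d}{dt}f(x(t))=\langle Qx(t)+q,\dot x(t)\rangle\le -\frac{\rho}{\eta}\|y(t)-x(t)\|^2=-\rho\eta\|\dot x(t)\|^2 .
\]
This step needs no condition $\rho>\lambda_{\max}(Q)$, because we differentiate $f$ along the flow rather than comparing iterates. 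Since $f$ is bounded below on $C$, integrating gives
\[
\int_0^\infty \|y(t)-x(t)\|^2\,dt<\infty \qquad\text{and}\qquad \int_0^\infty\|\dot x\|^2\,dt<\infty .
\]

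\textbf{Step 2: the residual tends to zero.} Put $r(t):=\|y(t)-x(t)\|$. This is the main obstacle, because $r\in L^2$ alone does not force $r(t)\to0$, and the trajectory may be unbounded, so compactness is unavailable. I would show that $r$ is uniformly continuous. The map $x\mapsto P_C(x-\frac1\rho(Qx+q))-x$ is globally Lipschitz with constant $L=2+\|Q\|/\rho$, so $|r(t)-r(s)|\le L\|x(t)-x(s)\|$. Moreover $\|\dot x\|=r/\eta$. Suppose $r(t_k)\ge 2\varepsilon_0$ along some $t_k\to\infty$. On an interval of length $\tau=\varepsilon_0\eta/(L(2\varepsilon_0+\dots))$, chosen via a Gronwall-type bound $\frac{d}{dt}r\le \frac{L}{\eta}r$ in the one-sided sense, the value $r$ stays above $\varepsilon_0$. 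Each such interval then contributes at least $\tau\varepsilon_0^2$ to $\int r^2$, which contradicts integrability. Concretely, $|\dot r|\le \frac{L}{\eta} r$ a.e. gives $r(s)\ge r(t_k)e^{-L|s-t_k|/\eta}$, which makes the argument clean. Hence $r(t)\to0$.

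\textbf{Step 3: distance to $C_*$.} Nonemptiness of $C_*$ is not assumed explicitly. It follows from the Frank--Wolfe theorem, since a quadratic function bounded below on a polyhedron attains its minimum, and the minimizer is a KKT point. For $t$ large we have $r(t)<\varepsilon$, so Lemma~\ref{lem:dist_kkt_set} yields $d(x(t),C_*)\le \ell\, r(t)\to0$, which proves \eqref{convergence_1}.

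\textbf{Step 4: a single component.} Take $\delta$ from \eqref{positive_excess} and choose $T$ such that $d(x(t),C_*)<\delta/3$ for all $t\ge T$. For each such $t$ there is a unique index $i(t)$ with $d(x(t),C_{i(t)})<\delta/3$, by the separation property. The function $t\mapsto d(x(t),C_i)$ is continuous for each $i$, so the sets $\{t\ge T: i(t)=i\}$ are open and disjoint and cover the connected set $[T,\infty)$. Therefore $i(t)$ is constant, equal to some $i_0$. Then $d(x(t),C_{i_0})=d(x(t),C_*)\to0$, and we take $\Omega:=C_{i_0}$. Closedness and polyhedrality of $\Omega$ come from Lemma~\ref{lem:KKT_components}(a).

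\textbf{Step 5: finitely many KKT points.} If $C_*$ is finite, each component is a single point $\bar x$, and Step 4 gives $x(t)\to\bar x$.
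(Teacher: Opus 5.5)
Your proposal is correct and follows essentially the same route as the paper: decrease of $f$ from the projection inequality at $u=x(t)$, finite energy $\int_0^\infty\|\dot x\|^2<\infty$, $\dot x(t)\to0$ by a contradiction over disjoint intervals, the error bound of Lemma~\ref{lem:dist_kkt_set}, and the connectedness argument that selects a single component. The only variation is in Step~2, where you get the fixed-length intervals from the bound $r(s)\ge r(t_k)e^{-L|s-t_k|/\eta}$, which holds because $r$ is locally Lipschitz with $|\dot r|\le \frac{L}{\eta}r$ a.e.; the paper instead uses a H\"older-$\tfrac12$ estimate for $\dot x$ obtained from Cauchy--Schwarz and the finite energy. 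Your ``concretely'' version is a complete argument on its own, so you can drop the earlier formulation containing the ``$\dots$'' placeholder.
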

\begin{proof} Since $x(\cdot)$ is the solution of~\eqref{dynamic_sys_A} starting from  $x^0\in C$, we have
	\begin{equation}\label{eq:projection}
	\eta	\dot{x}(t) +x(t)=  P_C\left(x(t)-\dfrac{1}{\rho}\big(Q x(t)+q\big)\right),
	\end{equation}
	for $t\geq 0$. By~\cite[Theorem~2.3, p.~9]{ks80}, the condition $z= P_C(u)$ is equivalent to the property 
	\begin{equation*}
		\langle u-z, y- z\rangle  \leq 0\quad \mbox{\rm for all}\; y\in C.
	\end{equation*}	
Thus, for every $t\geq 0$, by setting $u= x(t)-\dfrac{1}{\rho}\big(Q x(t)+q\big)$ and $z= \eta	\dot{x}(t) +x(t)$ we get from~\eqref{eq:projection} the inequality
	\begin{equation*}\label{dynamic_sys_A1}
		\left\langle\left(x(t)-\dfrac{1}{\rho}\big(Q x(t)+ q\big)\right)- \Big(\eta \dot{x}(t) + x(t)\Big),\; y-\Big(\eta \dot{x}(t) + x(t)\Big) \right\rangle \leq 0
	\end{equation*}
	for all $y\in C$. This gives	
	\begin{equation}\label{dynamic_sys_A2}
		\left\langle \eta \dot{x}(t) + \dfrac{1}{\rho}\big(Q x(t)+ q\big),\; y-\eta\dot{x}(t) -x (t)  \right\rangle \geq 0
	\end{equation}for all $y\in C$ and $t\geq 0$.

Thanks to Theorem~\ref{flow_invariant_A}, we know that $x(t)\in C$ for all $t\geq 0$. So, by substituting $y=x(t)$ to~\eqref{dynamic_sys_A2} and simplifying the expression, we get
	\begin{equation*}
		\eta \Vert\dot{x}(t)\Vert^2 + \dfrac{1}{\rho}\big\langle Q x(t)+ q, \dot{x}(t)  \big\rangle \leq 0 
	\end{equation*}
for all $t\geq 0$. Hence, for every $t\geq 0$, it holds that
	\begin{equation}\label{dynamic_sys_A32}
	\rho\eta\Vert\dot{x}(t)\Vert^2 + \big\langle Q x(t)+ q, \dot{x}(t)  \big\rangle \leq 0.
\end{equation}
 Since $f \big(x(t)\big)=  \left\langle \dfrac{1}{2}Qx(t)+q,x(t) \right\rangle$, taking the derivative with respect to $t$ and recalling that the matrix~$Q$ is symmetric, we have
\begin{equation}\label{f_new1}\begin{array}{rcl}
\dfrac{d}{dt}f \big(x(t)\big) &=& \left\langle \dfrac{1}{2}Q\dot{x}(t),x(t)\right\rangle + \left\langle \dfrac{1}{2}Qx(t)+q,\dot{x}(t) \right\rangle\\
&=&\left\langle \dot{x}(t),\dfrac{1}{2} Q^\top x(t)\right\rangle + \left\langle \dfrac{1}{2}Qx(t)+q,\dot{x}(t) \right\rangle\\
&=& \left\langle \dfrac{1}{2}(Q+Q^\top) x(t)+q,\dot{x}(t) \right\rangle\\
&=&\big\langle Qx(t)+q,\dot{x}(t) \big\rangle.\end{array}
\end{equation} By Theorem~\ref{global_sol_A}, the function $t\mapsto\dot{x}(t)$ is continuous on $\R_+$. Hence, the function \begin{equation}\label{f_new2}t\mapsto\big\langle Qx(t)+q,\dot{x}(t) \big\rangle\end{equation} is continuous on $\R_+$. Thanks to~\eqref{f_new1}, taking the Riemann integral of the function~\eqref{f_new2} and applying the Newton-Leibnitz formula, we get
$$\int\limits_{0}^{t}[\big\langle Qx(s)+q,\dot{x}(s) \big\rangle]ds= f\big(x(t)\big)-f\big(x(0)\big).$$
Therefore, integrating both sides of~\eqref{dynamic_sys_A32} over $[0, t]$ we obtain
$$\rho\eta \int\limits_{0}^{t}\Vert\dot{x}(s)\Vert^2ds + f\big(x(t)\big)-f\big(x(0)\big)\leq 0.$$
So, we have
	\begin{equation}\label{dynamic_sys_A52}
 \int\limits_{0}^{t}\Vert\dot{x}(s)\Vert^2 ds\leq \dfrac{1}{\rho\eta}\left[f(x_0)- 	f\big(x(t)\big)\right],
	\end{equation}
	for all $t\geq 0$. 
	
	To proceed furthermore, put $v^*=\inf\{f(x)\mid x\in C\}$ and note that $v^*\in\R$ by the assumption of the theorem. According to Theorem~\ref{flow_invariant_A}, one has $x(t)\in C$ for all $t\geq0$.  Hence, $	f\big(x(t)\big)\geq v^*$ for all $t \geq 0$.  It then follows from~\eqref{dynamic_sys_A52} that	
	\begin{equation}\label{dynamic_sys_A62}
 \int\limits_{0}^{t}\Vert\dot{x}(s)\Vert^2ds \leq\dfrac{1}{\rho\eta}\left[f(x_0)- 	v^*\right],\quad\mbox{\rm for all}\; t\in\R_+.
\end{equation}
Set $I(t) =  \displaystyle\int\limits_{0}^{t}\Vert\dot{x}(s)\Vert^2ds$ for $t\geq 0$. We see that the nonnegative function $I:\R_+\to\R$ is increasing and bounded above by the right-hand side of~\eqref{dynamic_sys_A62}. Thus, the value $I(t)$ converges to some number $M\in\R_+$ as $t\to+\infty$. In other words,
\begin{equation}\label{int_converges}
\int\limits_{0}^{+\infty}\Vert\dot{x}(s)\Vert^2ds=M.
\end{equation}

 Next, let us show that the function $\dot{x}(\cdot):\R_+\to\R^n$ is uniformly continuous. Put \begin{equation}\label{F(x)} F(x)=\dfrac{1}{\eta} \left[P_C\left(x-\dfrac{1}{\rho}\big(Q x+q\big)\right)-x\right]\quad \text{for } x\in \mathbb R^n.\end{equation}  As it has been shown in the proof of Theorem~2.1 in~\cite{pty_2025}, the function $F:\mathbb R^n\to \mathbb R^n$ is Lipschitz on $\R^n$ with the constant $$L:=\dfrac{1}{\eta}\left(\dfrac{1}{\rho}\|Q\|+2\right).$$From~\eqref{dynamic_sys_A} and~\eqref{F(x)} it follows that $\dot{x}(t)=F(x(t))$ for all $t\in\R_+$. Thus, for any $t,s\in \R_+$, we have
\begin{equation}\label{eq:dot_x_x}
	\|\dot{x}(t) - \dot{x}(s)\| = \|F(x(t)) - F(x(s))\| \leq L \|x(t) - x(s)\|.
\end{equation}
In addition, using the Newton-Leibnitz formula  and~\cite[Theorem~4(ii), p.~46]{DU_1977} gives
\begin{equation}\label{eq:xt_xs}	\|x(t) - x(s)\| = \left\| \int_s^t  \dot{x}(\tau) d\tau \right\| \leq \int_s^t  \|\dot{x}(\tau)\| d\tau
\end{equation}
for all $t,s\in\R_+$ with $s\leq t$.
Moreover, invoking the Cauchy-Schwarz inequality (see, e.g.,~\cite[Theorem~1.1, p.~100]{l1987}) for the normed space $L^2([s,t])$ equipped with its usual norm, from~\eqref{int_converges} we deduce that
\begin{equation} \label{eq:cauchy_schwarz}
	\begin{array}{ll}
\displaystyle	\int_s^t   \|\dot{x}(\tau)\| d\tau &\leq \left( \displaystyle\int_s^t  1^2 d\tau \right)^{1/2} \left( \displaystyle\int_s^t \|\dot{x}(\tau)\|^2 d\tau \right)^{1/2} \\
	&\leq  |t-s|^{1/2} \left( \displaystyle\int_0^{+\infty} \|\dot{x}(\tau)\|^2 d\tau \right)^{1/2} \\
	&=\sqrt{M}|t-s|^{1/2}.
	\end{array}
\end{equation}
Combining~\eqref{eq:dot_x_x},~\eqref{eq:xt_xs} and~\eqref{eq:cauchy_schwarz} yields
\begin{equation}\label{eq:holder_continuity}
	\|\dot{x}(t) - \dot{x}(s)\| \leq L  \sqrt{M}  |t-s|^{1/2}\quad {\rm for\ any}\ t,s\in\R_+.
\end{equation}
This means that $\dot{x}(\cdot)$ is H\"{o}lder continuous with exponent $\frac{1}{2}$ on $\R_+$. Given any $\varepsilon>0$, we choose $\delta>0$ as small as $L\sqrt{M}\delta^{1/2}<
\varepsilon$. Then, by~\eqref{eq:holder_continuity} we have $\|\dot{x}(t) - \dot{x}(s)\|<\varepsilon$ for any $t,s\in\R_+$ satisfying $|t-s|<\delta$. We have thus proved that the function $\dot{x}(\cdot)$ is uniformly continuous on $\R_+$.

\smallskip
\noindent{\sc Claim.} \textit{We have $\lim\limits_{t\to +\infty}\dot{x}(t)=0.$}

\smallskip
If the claim was false, then we would find a number~$\varepsilon>0$ and a sequence~$\{t_k\}$ converging to $+\infty$ such that $\|\dot{x}(t_k)\|\geq \varepsilon$. By the uniform continuity of  $\dot{x}(\cdot)$, there exists a number $\delta > 0$  such that for any $k \geq 1$ and any $t \in \left[t_k - \delta, t_k + \delta\right]$, we have $\|\dot{x}(t_k)-\dot{x}(t)\| < \dfrac{\varepsilon}{2}$. Hence, for every $k \geq 1$ and every $t \in \left[t_k - \delta, t_k + \delta\right]$, one has
\begin{align*}
\|\dot{x}(t)\|\geq \|\dot{x}(t_k)\|- \|\dot{x}(t_k)-\dot{x}(t)\|  >  \varepsilon -\dfrac{\varepsilon}{2}
= \dfrac{\varepsilon}{2}.
\end{align*}
 As $t_k\to+\infty$, we then can extract a  subsequence, which is still denoted by $\{t_k\}$, such that $t_1>\delta$ and $t_{k+1} - t_k > 2\delta$ for all $k \in \mathbb{N}$.  This implies that the intervals $I_k := \left[t_k - \delta, t_k + \delta\right]$, $k \in \mathbb{N}$, are pairwise disjoint. Therefore,
\begin{align*}
	\int_{0}^{+\infty} \|\dot{x}(s)\|^2 ds \ge \sum_{k=1}^{\infty} \int_{t_k - \delta}^{t_k + \delta} \|\dot{x}(s)\|^2 ds&\geq \sum_{k=1}^{\infty} \int_{t_k - \delta}^{t_k + \delta} \left(\dfrac{\varepsilon}{2}\right)^2 ds\\
	&=\sum_{k=1}^{\infty} a_k,
\end{align*}
where $a_k:=\dfrac{\delta \epsilon^2}{2}$ for all $k\in\N$. We then get
$\disp\int_{0}^{+\infty}\|\dot{x}(s)\|^2 ds = +\infty,$ which contradicts~\eqref{int_converges}. Thus, we must have
\begin{equation}\label{eq:dotx_to_0}
	\lim_{t\to+\infty}\dot{x}(t)=0.
\end{equation}

	 By our assumptions,  $f(x)$ is bounded below on~$C$ and $C$ is nonempty. So, according to the Frank-Wolfe theorem (see, e.g.,~\cite[Theorem~2.1]{lty05}), problem~\eqref{QP problem} has a solution. Hence, the set $C_*$ is nonempty (see Section~\ref{Sect-2}). Therefore, by Lemma~\ref{lem:dist_kkt_set}, there exist positive constants~$\varepsilon$ and~$\ell$ such that the estimate \eqref{ERB} holds for all $x\in C$ satisfying the condition~\eqref{locality}. 
	 
	  Thanks to~\eqref{eq:dotx_to_0}, we can find $\tau_0>0$ such that
	$$\|\dot{x}(t)\|< \dfrac{\varepsilon}{\eta}\quad\mbox{\rm for all}\; t\geq \tau_0.$$
Consequently, by the differential equation in~\eqref{dynamic_sys_A}, one has
	$$\left\|P_C\Big(x(t)-\frac{1}{\rho}(Qx(t)+q)\Big)-x(t)\right\|= \eta\|\dot{x}(t)\|< \varepsilon,$$
	for all $t\geq \tau_0$. This implies that, for every $t\geq \tau_0$, the element $x:=x(t)\in C$ satisfies the condition~\eqref{locality}. So, by~\eqref{ERB} we have 
	$$0\leq d(x(t),C_*)\leq \ell \left\|P_C\Big(x(t)-\frac{1}{\rho}(Qx(t)+q)\Big)-x(t)\right\| = \ell \eta \| \dot{x}(t)\|,$$
	for every $t\geq \tau_0$. As 
	$\lim\limits_{t\to+\infty}\dot{x}(t)=0$ by~\eqref{eq:dotx_to_0}, this yields~$\lim\limits_{t\to +\infty} d(x(t),C_*) = 0.$ We have thus proved that the property~\eqref{convergence_1} is valid. 
	
    According to Lemma~\ref{lem:KKT_components}, the KKT set $C_*$  of~\eqref{QP problem} has finitely many connected components, which are denoted by $C_1, C_2,\ldots, C_r$. In addition, we can find some $\delta >0$ such that~\eqref{positive_excess} holds for all $i, j$ belong to $I_r:=\{1,\ldots,r\}$ with $i\neq j$. For each $i\in I_r$, the distance function $d(\cdot, C_i)$ is globally Lipschitz (see~\cite[Proposition~2.4.1]{Clarke_1990}), and hence, it is continuous on $\R^n$. It follows that the sets
		$$V_i := \left\{x\in\R^n\mid d(x, C_i)<\dfrac{\delta}{3}\right\}\quad (i\in I_r)$$
	are open. Since~\eqref{positive_excess} holds for all $i, j\in I_r$ with $i\neq j$, the sets $V_i$, $i\in I_r$, are pairwise disjoint. 
	
	By~\eqref{convergence_1}, there is $\tau_1\in \R_+$ such that  $d(x(t),C_*)<\dfrac{\delta}{3}$ for every $t\geq \tau_1$. As $C_*=\bigcup\limits_{i\in I_r} C_i$, this implies that
	 $x(t)\in \bigcup\limits_{i\in I_r} V_i$ for every $t\geq \tau_1$. Thanks to the continuity of the function  $x:[\tau_1,+\infty)\to\R^n$, $t\mapsto x(t)$, and the fact that $V_i$, $i\in I_r$, are pairwise disjoint open sets, from the last property we can deduce the existence of an index $i_0\in I_r$ such that \begin{equation}\label{eq:x_in_Di0}
	 	x(t)\in V_{i_0}\qquad\text{for all }\, t\geq \tau_1.
	 \end{equation} Indeed, let $i_0$ be the unique index in $I_r$ such that $x(\tau_1)\in V_{i_0}$. Define the sets
	 $$U_1=\Big\{t\in [\tau_1,+\infty)\mid x(t)\in V_{i_0}\Big\},\quad U_2=\left\{t\in [\tau_1,+\infty)\mid x(t)\in \bigcup\limits_{i\in I_r\setminus\{i_0\}} V_i\right\}.$$ Note that $U_1$ and $U_2$ are disjoint subsets of the half line $[\tau_1,+\infty)$,  $U_1\cup U_2=[\tau_1,+\infty)$, and both sets are open in the induced topology of the half-line. Since $U_1\neq\emptyset$, we must have $U_2=\emptyset$, otherwise the half-line is disconnected, which is an absurd. Thus,~\eqref{eq:x_in_Di0} holds true.

	Now, fix any $t\geq \tau_1$. By~\eqref{eq:x_in_Di0} and the definition of $V_{i_0}$, one has $d(x(t),C_{i_0})<\dfrac{\delta}{3}$. Then, there exists $z\in C_{i_0}$ with $\|x(t)-z\|<\dfrac{\delta}{3}$. For any $i\in I_r\setminus\{i_0\}$ and any $y\in C_i$, by~\eqref{positive_excess} we have $\|z-y\|\geq d(z,C_i)\geq \delta$. Hence,
	\begin{equation*}
		\|x(t)-y\|\geq \|z-y\|-\|z-x(t)\|>\delta-\frac{\delta}{3}=\frac{2\delta}{3}.
	\end{equation*}
So, $$d(x(t),C_i)\geq \dfrac{2\delta}{3}> d(x(t),C_{i_0})$$ for every $i\in I_r\setminus\{i_0\}$. Since $d(x(t),C_*)=\min\{d(x(t),C_i)\mid i\in I_r\},$
	this implies that
	\begin{equation}\label{eq:dist_equal}
		d(x(t),C_*)=d(x(t),C_{i_0})\qquad\text{for all } t\geq \tau_1.
	\end{equation}
	Setting $\Omega=C_{i_0}$, from~\eqref{eq:dist_equal} and~\eqref{convergence_1}, we get
	\begin{equation*}
		\lim_{t\to+\infty} d(x(t),\Omega)=\lim_{t\to+\infty} d(x(t),C_{i_0})=\lim_{t\to+\infty} d(x(t),C_*)=0,
	\end{equation*}
	which establishes~\eqref{convergence_1a}.
	
	Finally, suppose that problem~\eqref{QP problem} has finitely many KKT points. Then, every connected component of $C_*$ must be a singleton. In particular, $\Omega=\{\bar x\}$
	for some $\bar x\in C_*$. Therefore, from~\eqref{convergence_1a} it follows that
	\begin{equation*}
		\lim_{t\to+\infty}\|x(t)-\bar x\|=\lim_{t\to+\infty} d(x(t),\Omega)=0,
	\end{equation*} which justifies the third assertion of the theorem. 
	\end{proof}

We have the following important corollary of Theorem~\ref{thm1}.

\begin{corollary}\label{cor1}
Under the assumptions of Theorem~\ref{thm1}, suppose in addition that $C$ is bounded. Then, there exists a subsequence of $\{x(t)\}_{t\geq 0}$ converging to a KKT point of~\eqref{QP problem}.
\end{corollary}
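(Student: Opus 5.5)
The plan is to combine compactness of $C$ with the convergence property~\eqref{convergence_1} of Theorem~\ref{thm1} and the closedness of $C_*$. Since $C$ is bounded and nonempty, the continuous function $f$ attains its minimum on the compact polyhedron $C$, so it is bounded below there. The hypotheses of Theorem~\ref{thm1} therefore hold automatically, and the extra boundedness assumption is used only for compactness.

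Take any sequence $t_k\to+\infty$. By Theorem~\ref{flow_invariant_A}, $x(t_k)\in C$ for all $k$. As $C$ is closed and bounded in $\mathbb R^n$, the Bolzano--Weierstrass theorem yields a subsequence, still denoted $\{t_k\}$, with $x(t_k)\to\bar x\in C$. It remains to show that $\bar x\in C_*$.

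For this I would use~\eqref{convergence_1}, which gives $d(x(t_k),C_*)\to 0$. The distance function $d(\cdot,C_*)$ is $1$-Lipschitz, so $d(\bar x,C_*)=\lim_k d(x(t_k),C_*)=0$. The only point needing care is that $C_*$ is closed, so that zero distance implies membership. By Lemma~\ref{lem:KKT_components}, $C_*$ is a finite union of finitely many polyhedral convex sets, hence closed. Alternatively, closedness follows from the characterization of $C_*$ as the solution set of the affine variational inequality~\eqref{AVI}, which is an intersection of closed sets. Thus $\bar x\in C_*$.

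As a stronger alternative, I could instead pick a point $z_k\in\Omega$ with $\|x(t_k)-z_k\|\to0$ using~\eqref{convergence_1a}; the limit then lies in the connected component $\Omega$ itself. No step is a real obstacle: the argument reduces to compactness together with closedness of $C_*$.
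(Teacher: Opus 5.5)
Your proposal is correct and follows the paper's proof essentially step for step: compactness of $C$ gives a subsequence $x(t_k)\to\bar x\in C$, the $1$-Lipschitz distance function together with \eqref{convergence_1} gives $d(\bar x,C_*)=0$, and closedness of $C_*$ from Lemma~\ref{lem:KKT_components} yields $\bar x\in C_*$. Two small points in your version improve on the paper: you state explicitly that $t_k\to+\infty$, which the paper leaves implicit but needs, and you note that boundedness of $C$ already makes $f$ bounded below on $C$.
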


\begin{proof}
Since $C$ is closed and bounded, it is compact. Then, there exists a subsequence $\{x(t_k)\}_{k\in\N}$ of the sequence~$\{x(t)\}_{t\geq 0}$ which converges to some point $\bar x\in C$. By Theorem~\ref{thm1}, we have $\lim\limits_{t\to +\infty} d(x(t),C_*) = 0.$ This yields
$$\lim\limits_{k\to+\infty} d(x(t_k),C_*) = 0.$$
As the distance function to a nonempty set is globally Lipschitz~ (see~\cite[Proposition~2.4.1]{Clarke_1990}), it follows  that
 $$0=\lim\limits_{k\to+\infty} d(x(t_k),C_*)=d\left(\lim\limits_{k\to+\infty}x(t_k),C_*\right) = d(\bar x, C_*).$$
Since $C_*$ is closed by Lemma~\ref{lem:KKT_components},  from the latter we deduce that $\bar x\in C_*$.  Thus, subsequence $\{x(t_k)\}_{k\in\N}$ of~$\{x(t)\}_{t\geq 0}$ converges to a KKT point of~\eqref{QP problem}.
\end{proof}

\medskip
To investigate the convergence rate of the dynamical scheme~\eqref{dynamic_sys_A}, we need the next technical lemma.

\begin{lemma}\label{lem:quadratic_bound}
	Suppose that $Q\in\mathbb R^{n\times n}$  is a symmetric matrix and $\rho$ is a nonzero real number. Then, for any vectors  $x, y\in \R^n$, one has
	\begin{equation}\label{auxiliary_estimate_Q}
	\frac{1}{4\rho^2} \|Q(x-y)\|^2 - \frac{1}{\rho}\langle Q(x-y), x-y\rangle \leq \max_{\lambda \in \sigma(Q)} \left[ \frac{\lambda}{\rho} \left( \frac{\lambda}{4\rho} - 1 \right) \right] \|x - y\|^2
	\end{equation}
	with $\sigma(Q)$ being the set of the eigenvalues of $Q$.
\end{lemma}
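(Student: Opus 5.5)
Set $z:=x-y$. The plan is to diagonalize $Q$ and reduce the inequality to a pointwise estimate on its eigenvalues. Since $Q$ is symmetric, the spectral theorem gives an orthogonal matrix $U$ and a real diagonal matrix $D=\mathrm{diag}(\lambda_1,\dots,\lambda_n)$ with $Q=UDU^\top$, where $\sigma(Q)=\{\lambda_1,\dots,\lambda_n\}$. Writing $w:=U^\top z$, orthogonality of $U$ yields
\begin{equation*}
\|Qz\|^2=\|Dw\|^2=\sum_{i=1}^n\lambda_i^2 w_i^2,\qquad \langle Qz,z\rangle=\langle Dw,w\rangle=\sum_{i=1}^n\lambda_i w_i^2,\qquad \|z\|^2=\sum_{i=1}^n w_i^2.
\end{equation*}

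Substituting these expressions, the left-hand side of~\eqref{auxiliary_estimate_Q} becomes
\begin{equation*}
\sum_{i=1}^n\Big(\frac{\lambda_i^2}{4\rho^2}-\frac{\lambda_i}{\rho}\Big)w_i^2=\sum_{i=1}^n\frac{\lambda_i}{\rho}\Big(\frac{\lambda_i}{4\rho}-1\Big)w_i^2 .
\end{equation*}
Each coefficient $g(\lambda_i)$, where $g(\lambda):=\frac{\lambda}{\rho}\big(\frac{\lambda}{4\rho}-1\big)$, is at most $\max_{\lambda\in\sigma(Q)}g(\lambda)$. This maximum exists because $\sigma(Q)$ is finite and nonempty. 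Since $w_i^2\ge 0$, we obtain the bound $\max_{\lambda\in\sigma(Q)}g(\lambda)\sum_i w_i^2=\max_{\lambda\in\sigma(Q)}g(\lambda)\,\|x-y\|^2$, which is~\eqref{auxiliary_estimate_Q}.

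The argument uses only $\rho\neq0$, which is needed to make the expressions defined; the sign of $\rho$ plays no role. There is no real obstacle. The only point requiring care is the bookkeeping that the quadratic forms transform correctly under the orthogonal change of variables. This works because $U^\top U=I$ gives $\|UDw\|=\|Dw\|$ and $\langle UDw,Uw\rangle=\langle Dw,w\rangle$.
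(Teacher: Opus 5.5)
Your proof is correct and follows essentially the paper's route. The paper writes the left-hand side as $(x-y)^\top M(x-y)$ with $M=\frac{1}{4\rho^2}Q^2-\frac{1}{\rho}Q$, identifies the eigenvalues of $M$ as $\frac{\lambda}{\rho}\bigl(\frac{\lambda}{4\rho}-1\bigr)$ for $\lambda\in\sigma(Q)$, and applies the Rayleigh bound; your explicit orthogonal diagonalization carries out both of those steps by hand.
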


\begin{proof}
Since $Q$ is symmetric, we have
	\begin{equation}\label{eq:M}
		\frac{1}{4\rho^2} \|Q(x-y)\|^2 - \frac{1}{\rho}\langle Q(x-y), x-y\rangle = (x-y)^\top \left( \frac{1}{4\rho^2} Q^2 - \frac{1}{\rho} Q \right) (x-y).
	\end{equation}
	Put $M = \dfrac{1}{4\rho^2} Q^2 - \dfrac{1}{\rho} Q$ and note that $M$ is a symmetric matrix. By~\cite[Theorem~1.1.6]{HJ_2002}, any eigenvalue $\mu$ of $M$ is given by 
	\begin{equation*}
		\mu = \frac{\lambda^2}{4\rho^2} - \frac{\lambda}{\rho}=\frac{\lambda}{\rho} \left( \frac{\lambda}{4\rho} - 1 \right),
	\end{equation*}
	where $\lambda$ is an eigenvalue of $Q$. So, combining this with an assertion of the Rayleigh theorem (see, e.g.,~\cite[Theorem~4.2.2(c)]{HJ_2002}), we have
	\begin{equation*}
 (x-y)^\top M (x-y)\leq \max_{\lambda \in \sigma(Q)} \left[ \frac{\lambda}{\rho} \left( \frac{\lambda}{4\rho} - 1 \right) \right] \|x - y\|^2.
	\end{equation*}
Hence, using~\eqref{eq:M}, we obtain the desired result.
\end{proof}

The distance between the trajectory of~\eqref{dynamic_sys_A} and a solution of~\eqref{QP problem} satisfies an estimate given by the next theorem.

\begin{theorem}\label{thm2}
Suppose that the assumptions of Theorem~\ref{thm1} are satisfied, $x^*$ is a solution of~\eqref{QP problem}, and $x(\cdot)$ is the unique solution of~\eqref{dynamic_sys_A} for some $x^0\in C$. Then, for every $t\in \R_+$ it holds that 
\begin{equation}\label{sol_bound}
\|x(t)-x^*\|\leq \exp\left(\dfrac{\alpha}{\eta} t\right)\|x^0-x^*\|,
\end{equation}
where \begin{equation}\label{alpha}\alpha := \max\limits_{\lambda \in \sigma(Q)} \left[ \frac{\lambda}{\rho} \left( \frac{\lambda}{4\rho} - 1 \right)\right].
\end{equation}
\end{theorem}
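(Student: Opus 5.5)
The plan is a Gronwall-type argument for the squared distance $\varphi(t):=\|x(t)-x^*\|^2$. It combines the projection inequality~\eqref{dynamic_sys_A2} from the proof of Theorem~\ref{thm1} with the variational inequality~\eqref{AVI} satisfied by $x^*$, and then uses Lemma~\ref{lem:quadratic_bound} to absorb the quadratic terms into $\alpha\|x(t)-x^*\|^2$.

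Fix $t\ge 0$ and abbreviate $d:=x(t)-x^*$, $v:=\eta\dot x(t)$ and $g:=Qx(t)+q$. We use two inequalities.

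First, take $y=x^*\in C$ in~\eqref{dynamic_sys_A2}. This gives $\langle v+\frac1\rho g,\,-d-v\rangle\ge 0$, that is,
\[
-\langle v,d\rangle-\|v\|^2-\tfrac1\rho\langle g,d+v\rangle\ge 0 .
\]
Second, $x^*$ is a solution of~\eqref{QP problem}, hence a KKT point, hence a solution of~\eqref{AVI}. The point $u:=x(t)+v=P_C\big(x(t)-\frac1\rho g\big)$ lies in $C$. Substituting it into~\eqref{AVI} gives $\frac1\rho\langle Qx^*+q,\,d+v\rangle\ge 0$.

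Adding the two inequalities and using $g-(Qx^*+q)=Qd$, we obtain
\[
\langle v,d\rangle\le -\|v\|^2-\tfrac1\rho\langle Qd,v\rangle-\tfrac1\rho\langle Qd,d\rangle .
\]
This is the key step. The cross term $\langle Qd,v\rangle$ involves the unknown velocity, so I will eliminate $v$ by maximizing the concave quadratic $v\mapsto -\|v\|^2-\frac1\rho\langle Qd,v\rangle$. Its maximum is $\frac{1}{4\rho^2}\|Qd\|^2$, attained at $v=-Qd/(2\rho)$; this is exactly why the factor $\frac1{4\rho^2}$ appears in Lemma~\ref{lem:quadratic_bound}. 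Hence
\[
\eta\langle \dot x(t),d\rangle=\langle v,d\rangle\le \tfrac{1}{4\rho^2}\|Qd\|^2-\tfrac1\rho\langle Qd,d\rangle\le \alpha\|d\|^2 ,
\]
where the last inequality is Lemma~\ref{lem:quadratic_bound} applied with $x=x(t)$ and $y=x^*$.

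Since $x(\cdot)$ is $C^1$ by Theorem~\ref{global_sol_A}, the function $\varphi$ is $C^1$ with $\varphi'(t)=2\langle\dot x(t),x(t)-x^*\rangle\le \frac{2\alpha}{\eta}\varphi(t)$ for all $t\ge 0$. Then $\frac{d}{dt}\big(e^{-2\alpha t/\eta}\varphi(t)\big)\le 0$, so $\varphi(t)\le e^{2\alpha t/\eta}\varphi(0)$. Taking square roots yields~\eqref{sol_bound}.

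I expect the main obstacle to be choosing the right test points: $y=x^*$ in the projection inequality and $u=x(t)+\eta\dot x(t)$ in~\eqref{AVI}, rather than $u=x(t)$. The choice $u=x(t)$ would leave an uncancelled term involving $\langle Qx^*+q, v\rangle$. After these choices, completing the square in $v$ is routine. Note that no sign condition on $\alpha$ is needed, and no assumption such as $\rho>\lambda_{\max}(Q)$ is required.
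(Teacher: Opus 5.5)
Your proposal is correct and follows essentially the same route as the paper's proof. It uses the same test points, $y=x^*$ in \eqref{dynamic_sys_A2} and $u=x(t)+\eta\dot x(t)$ in \eqref{AVI}, and your maximization of the concave quadratic in $v$ is exactly the paper's completion of the square followed by dropping $\bigl\|\eta\dot x(t)+\frac{1}{2\rho}Q(x(t)-x^*)\bigr\|^2$, after which both arguments apply Lemma~\ref{lem:quadratic_bound} and the integrating factor $e^{-2\alpha t/\eta}$.
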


\begin{proof}
	 By our assumptions, $f$ is bounded below on $C$. Hence, using the Frank-Wolfe theorem (see, e.g.,~\cite[Theorem~2.1]{lty05}), we can assert that problem~\eqref{QP problem} has a solution~$x^*$. Substituting~$y=x^*$ into~\eqref{dynamic_sys_A2} yields
\begin{equation}\label{eq: est_kkt_1}
	\left\langle \eta \dot{x}(t) + \dfrac{1}{\rho}\big(Q x(t)+ q\big),\; x^*-\eta\dot{x}(t) -x (t)  \right\rangle \geq 0
\end{equation} for all $t\geq 0$. 
In addition, since $\eta\dot{x}(t) +x(t)=P_C\left(x(t)-\dfrac{1}{\rho}\big(Q x(t)+q\big)\right)\in C$ by~\eqref{eq:projection} and~$x^*\in C_*$, it follows from~\eqref{AVI} that
\begin{equation}\label{eq: est_kkt_2}
\dfrac{1}{\rho}\big\langle(Qx^*+ q), \eta\dot{x}(t) +x(t) - x^*\big\rangle \geq 0
\end{equation} 
 for every $t\in\R_+$. Adding inequalities~\eqref{eq: est_kkt_1} and~\eqref{eq: est_kkt_2} side by side gives
 $$\left\langle \eta \dot{x}(t) + \dfrac{1}{\rho}Q\big(x(t)-x^*\big),\; x^*-\eta\dot{x}(t) -x (t)  \right\rangle \geq 0$$
 for all $t\geq 0$. This is equivalent to
 $$\langle \eta \dot{x}(t),x(t)-x^*\rangle +\eta^2\|\dot{x}(t)\|^2+\dfrac{1}{\rho}\langle Q\big(x(t)-x^*\big),\eta \dot{x}(t)\rangle +\dfrac{1}{\rho}\left\langle Q\big( x(t)-x^*\big), x(t)-x^*\right\rangle\leq 0$$
 for all $t\in\R_+$.
 Then, by completing the square based on the third and forth terms of the sum on the left-hand side of the this inequality, we have
 $$\begin{array}{ll}
 &\langle \eta \dot{x}(t),x(t)-x^*\rangle + \left\|\eta \dot{x}(t) +\dfrac{1}{2\rho}Q\big(x(t)-x^*\big)\right\|^2-\dfrac{1}{4\rho^2}\|Q\big(x(t)-x^*\big)\|^2 \\
 &\quad+\dfrac{1}{\rho}\left\langle Q\big(x(t)-x^*\big), x(t)-x^*\right\rangle\leq 0
 \end{array}$$ 
 for all $t\geq 0$. Omitting the second term on the left-hand side of the last inequality and noting that $\langle \eta \dot{x}(t),x(t)-x^*\rangle= \dfrac{\eta}{2}\dfrac{d}{dt}\|x(t)-x^*\|^2$, we obtain
 \begin{equation*}
 	\dfrac{\eta}{2}\dfrac{d}{dt}\|x(t)-x^*\|^2 -\dfrac{1}{4\rho^2}\left\|Q\big(x(t)-x^*\big)\right\|^2  +\dfrac{1}{\rho}\left\langle Q\big(x(t)-x^*\big), x(t)-x^*\right\rangle\leq 0,
 \end{equation*}
 for all $t\geq 0$. Hence,
 
\begin{equation*}
	\dfrac{\eta}{2}\dfrac{d}{dt}\|x(t)-x^*\|^2 \leq\dfrac{1}{4\rho^2}\left\|Q\big(x(t)-x^*\big)\right\|^2  -\dfrac{1}{\rho}\left\langle Q\big(x(t)-x^*\big), x(t)-x^*\right\rangle
\end{equation*}
for all $t\geq 0$. Now, using the inequality~\eqref{auxiliary_estimate_Q} in 
 Lemma~\ref{lem:quadratic_bound} for $x=x(t)$ and $y=x^*$ gives
 \begin{equation*}
 	\dfrac{\eta}{2}\dfrac{d}{dt}\|x(t)-x^*\|^2 \leq\dfrac{1}{4\rho^2}\left\|Q\big(x(t)-x^*\big)\right\|^2  -\dfrac{1}{\rho}\left\langle Q\big(x(t)-x^*\big), x(t)-x^*\right\rangle\leq \alpha \|x(t) - x^*\|^2
 \end{equation*}
 for all $t \geq 0$, where $\alpha = \max\limits_{\lambda \in \sigma(Q)} \left[ \dfrac{\lambda}{\rho} \left( \dfrac{\lambda}{4\rho} - 1 \right)\right]$. Multiplying both sides of the resulted inequality $\dfrac{\eta}{2}\dfrac{d}{dt}\|x(t)-x^*\|^2 -\alpha \|x(t) - x^*\|^2\leq 0$ by $\dfrac{2}{\eta}\exp\left(-\dfrac{2\alpha}{\eta}t\right)$ yields
 $$\exp\left(-\frac{2\alpha}{\eta}t\right)\dfrac{d}{dt}\|x(t)-x^*\|^2-\frac{2\alpha }{\eta}\exp\left(-\frac{2\alpha}{\eta}t\right)\|x(t) - x^*\|^2\leq 0$$
 for all $t\geq 0$. This is equivalent to saying that
 $$\dfrac{d}{dt}\left[\exp\left(-\frac{2\alpha}{\eta}t\right)\|x(t)-x^*\|^2\right]\leq 0$$ for all $t\geq 0$. S, by the classical Newton-Leibnitz formula we have 
 $$0\geq\int_0^t  \left(\dfrac{d}{d\tau}\left[\exp\left(-\frac{2\alpha}{\eta}\tau\right)\|x(\tau)-x^*\|^2\right]\right)d\tau= \exp\left(-\frac{2\alpha}{\eta}t\right)\|x(t)-x^*\|^2-\|x(0)-x^*\|^2$$ for all $t\geq 0$. Hence, the inequality
$$\|x(t)-x^*\|^2\leq \exp\left(\frac{2\alpha}{\eta}t\right)\|x(0)-x^*\|^2$$ holds for every $t\in\R_+$.
Taking the square root of both sides of that inequality implies~\eqref{sol_bound}.
\end{proof}

The estimate~\eqref{sol_bound} for the convergence rate of the trajectory $x(t)$ depends greatly on the sign of the constant~$\alpha$ in~\eqref{alpha}. The latter can be determined explicitly by the correlation between the spectrum of $Q$ and the interval~$(0,4\rho)$ as follows.
\begin{lemma}\label{lem4}
	Let $\rho>0$ and $\alpha$ be given by~\eqref{alpha}. Then,
$$
	\begin{cases}
		\alpha <0, & \text{if } \sigma(Q)\subset(0,4\rho),\\[2mm]
		\alpha =0, & \text{if } \sigma(Q)\subset[0,4\rho] \text{ and } \sigma(Q)\cap\{0,4\rho\}\neq\emptyset,\\[2mm]
		\alpha >0, & \text{if } \sigma(Q)\not\subset[0,4\rho].
	\end{cases}
$$
\end{lemma}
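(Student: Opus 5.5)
The plan is to reduce everything to the sign of a single scalar function evaluated on the finite set $\sigma(Q)$. Set
$$g(\lambda):=\frac{\lambda}{\rho}\Big(\frac{\lambda}{4\rho}-1\Big)=\frac{\lambda(\lambda-4\rho)}{4\rho^2},\qquad \lambda\in\R,$$
so that $\alpha=\max_{\lambda\in\sigma(Q)} g(\lambda)$ by~\eqref{alpha}. Since $Q$ is real symmetric, $\sigma(Q)$ is a nonempty finite subset of $\R$, and the maximum is attained. Because $\rho>0$, the denominator $4\rho^2$ is positive. Hence the sign of $g(\lambda)$ equals the sign of the quadratic $\lambda(\lambda-4\rho)$, whose roots are $0$ and $4\rho$. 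This gives $g(\lambda)<0$ for $\lambda\in(0,4\rho)$, $g(\lambda)=0$ for $\lambda\in\{0,4\rho\}$, and $g(\lambda)>0$ for $\lambda\notin[0,4\rho]$.

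The three cases then follow directly.

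\emph{First case.} If $\sigma(Q)\subset(0,4\rho)$, every value $g(\lambda)$ with $\lambda\in\sigma(Q)$ is strictly negative. Since the set is finite, the maximum is one of these values, so $\alpha<0$.

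\emph{Second case.} If $\sigma(Q)\subset[0,4\rho]$, all values are $\le 0$. If moreover some eigenvalue lies in $\{0,4\rho\}$, the value $0$ is attained, so $\alpha=0$.

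\emph{Third case.} If some $\lambda_0\in\sigma(Q)$ lies outside $[0,4\rho]$, then $\alpha\ge g(\lambda_0)>0$.

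The three conditions are mutually exclusive and cover all possibilities, so the case distinction is exhaustive. There is no real obstacle here. The only points needing care are that the maximum is attained because $\sigma(Q)$ is finite and nonempty (this is what makes the first case strict), and that the positivity of $\rho$ fixes the orientation of the interval $[0,4\rho]$.
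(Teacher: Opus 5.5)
Your proof is correct and is essentially the paper's argument: the paper also writes $\alpha=\max_{\lambda\in\sigma(Q)}\varphi(\lambda)$ with $\varphi(\lambda)=\frac{\lambda^2}{4\rho^2}-\frac{\lambda}{\rho}$, uses that $0$ and $4\rho$ are its roots with positive leading coefficient to get the sign pattern, and reads off the three cases. Your explicit remark that the maximum over the finite nonempty set $\sigma(Q)$ is attained, which is what makes the first case strict, is a point the paper leaves implicit.
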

\begin{proof}
	Put
	$\varphi(\lambda)=\frac{\lambda^2}{4\rho^2}-\frac{\lambda}{\rho}$ for $\lambda\in\mathbb{R}$, and note by~\eqref{alpha} that  $\alpha=\max\limits_{\lambda\in\sigma(Q)}\varphi(\lambda)$. Since $0$ and~$4\rho$ are the two roots of the quadratic polynomial $\varphi(\lambda)$ whose highest coefficient is positive, we have
	\begin{equation}\label{value_varphi}
		\begin{cases}
			\varphi(\lambda)<0 \ \text{ for } \lambda\in(0,4\rho), \\
			\varphi(0)=\varphi(4\rho)=0, \\
			\varphi(\lambda)>0 \ \text{ for } \lambda\in(-\infty,0)\cup(4\rho,\infty).
		\end{cases}
	\end{equation}
	
If $\sigma(Q)\subset(0,4\rho)$, then by~\eqref{value_varphi} one has $\alpha=\max\limits_{\lambda\in\sigma(Q)}\varphi(\lambda)<0$.

If $\sigma(Q)\subset[0,4\rho]$ and $\sigma(Q)\cap\{0,4\rho\}\neq\emptyset$ then, using~\eqref{value_varphi} again, one can deduce that~$\alpha=0$. 

If $\sigma(Q)\not\subset[0,4\rho]$, then there exists 
 $\bar\lambda\in\sigma(Q)$ with $\bar\lambda<0$ or there is  $\hat\lambda\in\sigma(Q)$ with $\hat\lambda>4\rho$. Since $\varphi(\bar\lambda)>0$ and $\varphi(\hat\lambda)>0$ by~\eqref{value_varphi}, one has
$
\alpha=\max\limits_{\lambda\in\sigma(Q)}\varphi(\lambda)>0.
$

The proof is complete.
\end{proof}

 Based on Theorem~\ref{thm2} and Lemma~\ref{lem4}, we have the next result.

\begin{corollary}\label{cor2_new}
	Under the assumptions of Theorem~\ref{thm2}, the following assertions are valid.
	 \begin{itemize}
		\item[{\rm (a)}] \text{If} $\sigma(Q)\subset(0,4\rho)$ \text{then}, for every $t\in \R_+$, one has the estimate~\eqref{sol_bound} with $x^*$ being the unique solution of~\eqref{QP problem} and $\alpha$ being defined by~\eqref{sol_bound}, and $\alpha<0$. In particular, $x(t)$ converges to $x^*$ when $t\to +\infty$.
		\item[{\rm (b)}] \text{If} $\sigma(Q)\subset[0,4\rho] \text{ and } \sigma(Q)\cap\{0,4\rho\}\neq\emptyset$ \text{then}, for every $t\in \R_+$, one has
		\begin{equation}\label{sol_bound2}
			\|x(t)-x^*\|\leq\|x^0-x^*\|.
		\end{equation}
			Thus, the whole trajectory $\{x(t)\}_{t\geq 0}$ lies in the ball $\bar B\big(x^*,\|x^0-x^*\|\big)$.
		\item[{\rm (c)}] \text{If} $\sigma(Q)\not\subset[0,4\rho]$, \text{then} $x(t)=x^*$ for all $t\in \R_+$ if $x^0=x^*$, and 
		\begin{equation}\label{sol_bound3}
			\dfrac{\|x(t)-x^*\|}{\|x^0-x^*\|}\leq \exp\left(\dfrac{\alpha}{\eta} t\right)
		\end{equation} for all $t\in \R_+$ if $x^0\neq x^*$, where  $\alpha$ is defined by~\eqref{sol_bound} and is positive.
	\end{itemize} 
\end{corollary}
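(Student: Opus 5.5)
The corollary follows directly by combining Theorem~\ref{thm2} with the sign analysis of Lemma~\ref{lem4}, one case at a time. Throughout, I use the estimate~\eqref{sol_bound}, which holds for every solution $x^*$ of~\eqref{QP problem}. Such a solution exists by the Frank-Wolfe theorem, as recalled in the proof of Theorem~\ref{thm2}.

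For (a), assume $\sigma(Q)\subset(0,4\rho)$. Then $Q$ is positive definite, so $f$ is strongly convex and~\eqref{QP problem} has a unique solution $x^*$. Convexity also gives $C_*=\{x^*\}$, since a KKT point of a convex program is a global minimizer; this is not needed for the estimate, but it explains why $x^*$ is ``the'' solution. Lemma~\ref{lem4} gives $\alpha<0$. Since $\eta>0$, we have $\exp(\alpha t/\eta)\to0$ as $t\to+\infty$, and~\eqref{sol_bound} then yields $\|x(t)-x^*\|\to0$.

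For (b), Lemma~\ref{lem4} gives $\alpha=0$. In that case~\eqref{sol_bound} reads exactly~\eqref{sol_bound2}, which says that $x(t)$ stays in the closed ball $\bar B\big(x^*,\|x^0-x^*\|\big)$.

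For (c), Lemma~\ref{lem4} gives $\alpha>0$.
\begin{itemize}
\item If $x^0=x^*$, then~\eqref{sol_bound} gives $\|x(t)-x^*\|\le 0$, so $x(t)=x^*$ for all $t$. Alternatively, $x^*\in C_*$ satisfies~\eqref{AVI}, so $P_C\big(x^*-\tfrac1\rho(Qx^*+q)\big)=x^*$. Hence the constant function is a solution of~\eqref{dynamic_sys_A}, and by the uniqueness in Theorem~\ref{global_sol_A} it is the trajectory.
\item If $x^0\ne x^*$, dividing~\eqref{sol_bound} by the positive number $\|x^0-x^*\|$ gives~\eqref{sol_bound3}.
\end{itemize}

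No step is a genuine obstacle. The only point needing care is the uniqueness claim in (a), which rests on the observation that $\sigma(Q)\subset(0,4\rho)$ forces $Q\succ0$. I also note that the reference ``$\alpha$ defined by~\eqref{sol_bound}'' in the statement should read~\eqref{alpha}.
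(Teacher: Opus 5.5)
Your proposal is correct and follows the paper's proof essentially step for step: in each case you combine Theorem~\ref{thm2} with the sign of $\alpha$ from Lemma~\ref{lem4}, and in (a) you use positive definiteness of $Q$ to get uniqueness of $x^*$. Your fixed-point-plus-uniqueness argument for the case $x^0=x^*$ in (c) is a valid extra, and you are right that ``defined by~\eqref{sol_bound}'' should read~\eqref{alpha}.
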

\begin{proof} To proof assertion~(a), note that the condition $\sigma(Q)\subset(0,4\rho)$ implies that $Q$ is a positive definite matrix. Hence, problem~\eqref{QP problem} has a unique solution (see, e.g.,~\cite[Theorem~4.4]{lty05}). By Lemma~\ref{lem4}, one has $\alpha<0$. So, the desired properties follow from Theorem~\ref{thm2}.
	
	Assertion~(b) holds because if $\sigma(Q)\subset[0,4\rho]$ and $\sigma(Q)\cap\{0,4\rho\}\neq\emptyset$, then the constant $\alpha$ defined by~\eqref{sol_bound} is 0 (see Lemma~\ref{lem4}); so~\eqref{sol_bound} implies~\eqref{sol_bound2}.
	
	Finally, to prove assertion~(c), note that if $\sigma(Q)\not\subset[0,4\rho]$, then the constant $\alpha$ is positive by Lemma~\ref{lem4}. Hence, the two claims made in~(c) are immediate from Theorem~\ref{thm2}.	
\end{proof}

The following remarkable property holds: \textit{The value of the objective function of~\eqref{QP problem} is decreasing along the trajectory $\{x(t)\}_{t\geq 0}$ of~\eqref{dynamic_sys_A}.}

\begin{theorem}\label{thm:nonincreasing}
	Under the assumptions of Theorem~\ref{thm1}, one has
	\begin{equation}\label{eq:obj_funct_decrease}
		f\big(x(t_2)\big)\leq f\big(x(t_1)\big) -\rho\eta \int\limits_{t_1}^{t_2}\Vert\dot{x}(s)\Vert^2ds
	\end{equation}
	for every $t_1, t_2 \in\R_+$ such that $t_1\leq t_2$.
\end{theorem}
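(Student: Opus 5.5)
The plan is to reuse the pointwise differential inequality~\eqref{dynamic_sys_A32} from the proof of Theorem~\ref{thm1}, together with the chain-rule identity~\eqref{f_new1}, and to integrate over $[t_1,t_2]$ instead of $[0,t]$.

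\textbf{Step 1 (pointwise inequality).} Since $x^0\in C$, Theorem~\ref{flow_invariant_A} gives $x(t)\in C$ for all $t\geq 0$. We may then put $y=x(t)$ in the projection inequality~\eqref{dynamic_sys_A2}, which yields~\eqref{dynamic_sys_A32}:
$$\rho\eta\|\dot x(t)\|^2+\langle Qx(t)+q,\dot x(t)\rangle\le 0 \quad\text{for every } t\ge 0.$$

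\textbf{Step 2 (derivative of $f$ along the trajectory).} By~\eqref{f_new1}, using the symmetry of $Q$,
$$\frac{d}{dt}f(x(t))=\langle Qx(t)+q,\dot x(t)\rangle .$$
Combining this with Step 1 gives
$$\frac{d}{dt}f(x(t))\le -\rho\eta\|\dot x(t)\|^2 \quad\text{for all } t\ge 0.$$

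\textbf{Step 3 (integration).} By Theorem~\ref{global_sol_A}, $x(\cdot)$ is $C^1$, so both $t\mapsto \frac{d}{dt}f(x(t))$ and $t\mapsto\|\dot x(t)\|^2$ are continuous. Hence both are Riemann integrable on $[t_1,t_2]$ for $0\le t_1\le t_2$. Integrating the inequality of Step 2 over $[t_1,t_2]$ and applying the Newton--Leibniz formula to the left-hand side gives
$$f(x(t_2))-f(x(t_1))\le -\rho\eta\int_{t_1}^{t_2}\|\dot x(s)\|^2\,ds,$$
which is~\eqref{eq:obj_funct_decrease}.

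\textbf{Remarks.} Monotonicity of $f$ along the trajectory follows because the integral is nonnegative. The lower-boundedness hypothesis of Theorem~\ref{thm1} is not needed here; only $x^0\in C$ is used. There is no real obstacle. The only delicate point is that $y=x(t)$ is admissible in~\eqref{dynamic_sys_A2}, and this is exactly what flow invariance (Theorem~\ref{flow_invariant_A}) guarantees.
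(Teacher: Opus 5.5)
Your proposal is correct and matches the paper's proof essentially verbatim: the paper also takes \eqref{dynamic_sys_A32}, obtained by setting $y=x(t)$ in \eqref{dynamic_sys_A2}, which flow invariance permits, together with the chain-rule identity \eqref{f_new1}, and integrates over $[t_1,t_2]$. Your side remark is also accurate: the lower-boundedness of $f$ on $C$ is not used for this inequality; only $x^0\in C$ matters.
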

\begin{proof}
Take any $t_1, t_2\in\R_+$ with $t_1\leq t_2$. Arguing similarly as in the proof of Theorem~\ref{thm1}, we obtain~\eqref{dynamic_sys_A32} and~\eqref{f_new1}. Hence, thanks to~\eqref{f_new1} and the continuity of the function $t\mapsto \Vert\dot{x}(t)\Vert^2$, integrating both sides of~\eqref{dynamic_sys_A32} over $[t_1,t_2]$ we obtain
$$\rho\eta \int\limits_{t_1}^{t_2}\Vert\dot{x}(s)\Vert^2ds + f\big(x(t_2)\big)-f\big(x(t_1)\big)\leq 0.$$
This implies~\eqref{eq:obj_funct_decrease}.
We can thus deduce that the value of the objective function $f$ is decreasing along the trajectory~$x(\cdot)$.
\end{proof}

Now, let us present some remarks on the use of the above results for solving~\eqref{QP problem} numerically by the dynamical approach.

\begin{remark}\label{rem:1}
	\rm If the matrix $Q$ is positive definite, then the condition $\sigma(Q)\subset(0,4\rho)$ is satisfied when we choose $\rho\in \left(4^{-1}\lambda_{\rm max} (Q),+\infty\right)$. Therefore, by assertion~(a) of Corollary~\ref{cor2_new}, the trajectory $x(\cdot)$ converges exponentially to the unique solution of~\eqref{QP problem}. Given any $\varepsilon>0$ and suppose that $x^0\neq x^*$, from~\eqref{sol_bound} any the fact that $\alpha >0$ we can easily deduce that $\|x(t)-x^*\|\leq \varepsilon$ for all $t\geq \tau_\varepsilon$ with $ \tau_\varepsilon:=\dfrac{\eta}{\alpha}\ln\dfrac{\varepsilon}{\|x^0-x^*\|}$. In particular, solving the initial value problem~\eqref{dynamic_sys_A} just for $t\in[0,\tau_\varepsilon]$ one gets a piece of the trajectory of~\eqref{dynamic_sys_A} which satisfies the condition $\|x(\tau_\varepsilon)-x^*\|\leq \varepsilon$. If $x^0= x^*$ then~\eqref{sol_bound} forces $x(t)=x^*$ for all $t\geq 0$.
\end{remark}

\begin{remark}\label{rem:2}\rm
If $Q$ is positive definite, then by the first assertion of~\cite[Theorem~4.4]{lty05} we have $C_*=\{x^*\}$, where $x^*$ is the unique solution of~\eqref{QP problem}. So, for any $\rho >0$, the property~\eqref{convergence_1} in Theorem~\ref{thm1} implies that $\lim\limits_{t\to+\infty} x(t)=x^*$. This property is weaker than the exponential convergence of $x(\cdot)$ provided by  Remark~\ref{rem:1}, if $\rho$ is such that $\rho>\dfrac{1}{4}\lambda_{\rm max} (Q)$.
\end{remark}

\begin{remark}\label{rem:3}\rm
If $Q$ is positive semidefinite and $\lambda_{\rm min}(Q)=0$ then, for any  $\rho\geq\dfrac{1}{4}\lambda_{\rm max} (Q)$, the trajectory $\{x(t)\}_{t\geq 0}$ lies in a closed ball. Hence, there exists a sequence of positive numbers $\{t_k\}$ converging to $+\infty$ such that $\lim\limits_{k\to+\infty} x(t)=\bar x$ for some $\bar x \in C$. Since the distance function $d(\cdot, C_*)$ is Lipschitzian, using\eqref{convergence_1} gives
$$0=\lim\limits_{t\to +\infty} d(x(t),C_*) = \lim\limits_{k\to +\infty} d(x(t_k),C_*)= d(\bar x,C_*).$$
This shows that $\bar x$ is a KKT point of~\eqref{QP problem}. As $Q$ is positive semidefinite, by the third claim of~\cite[Theorem~4.4]{lty05} we can infer that $\bar x$ is a solution of~\eqref{QP problem}. 
\end{remark}

\begin{remark}\label{rem:4}\rm
	If $Q$ is not positive semidefinite then, for any $\rho >0$ one has $\sigma(Q)\not\subset[0,4\rho]$. Thus, by Corollary~\ref{cor2_new}(c), we have the estimate~\eqref{sol_bound3} for all $t\in\R_+$, provided that $x^0\neq x^*$. Obviously, this property does not imply~\eqref{convergence_1} and~\eqref{convergence_1a}.
\end{remark}

Among other things, the above four remarks shows that, despite to some interesting relationships, Theorem~\ref{thm1} and Theorem~\ref{thm2} are independent results.

\section{Examples}\label{Sect-4}
\subsection{First example}
This example shows how the conclusions of Theorem~\ref{thm1} manifest. Here, we have a nonconvex quadratic program with an unbounded set of KKT points.
	\begin{example}\label{ex:1}
		{\rm			
			For $n=2$, $Q=\begin{pmatrix}2&0\\0&-2\end{pmatrix}$, $q=b=(0,0)^\top$, $A=\begin{pmatrix}1&-1\\1&1\end{pmatrix}$, the IQP in~\eqref{QP problem} becomes minimizing $f(x)=x_1^2-x_2^2$ over the set $C=\{x=(x_1,x_2)\mid x_1\geq|x_2|\}.$ Fix an initial value $x^0=(x_1^0,x_2^0)\in C$ and take arbitrarily positive numbers $\eta$ and $\rho$. Note that $f$ is bounded below on $C$ and $v^*:=\inf\limits_{x\in C}f(x)=0$.  Hence, the assumptions of Theorem~\ref{thm1} are satisfied. The KKT point set is $C_*=E_+\cup E_-,$
			where
			$$E_+:=\{(t,t)\mid t\geq0\},\quad E_-:=\{(t,-t)\mid t\geq0\}.$$
			Clearly, $C_*$ coincides with the solution set of the IQP under our investigation. Since $C_*$ is connected, it has a unique connected component $\Omega=C_*$ which is the union of two polyhedral convex sets.
			
			Concerning the trajectory $x(t)=(x_1(t),x_2(t))$ of the dynamical system~\eqref{dynamic_sys_A}, observe that $$x(t)-\dfrac{1}{\rho}\big(Qx(t)+q\big)= \begin{pmatrix}\left(1-\dfrac2\rho\right)x_1(t)\\[1.5ex]\left(1+\dfrac2\rho\right)x_2(t)\end{pmatrix};$$
			 hence the differential equation becomes
			\begin{equation}\label{eq:ode_start}
				\dot x(t)=\dfrac{1}{\eta}\big[P_C\big(z(x(t))\big)-x(t)\big]
			\end{equation}
			with $z(x):=(\gamma x_1,\beta x_2)$, where $\gamma:=1-\dfrac2\rho$ and $\beta:=1+\dfrac2\rho$. We have $\gamma\in(-\infty,1)$, $\beta\in(1,+\infty)$, 
			\begin{equation}\label{eq:ab_identities_new}
			\beta = 1 + \frac{2}{\rho} > \left|1 - \frac{2}{\rho}\right| = |\gamma|
			\end{equation}
			and 
			\begin{equation}\label{eq:ab_identities}
				\gamma+\beta=2,\qquad \beta-\gamma=\frac4\rho>0.
			\end{equation}
			
			For any $u=(u_1,u_2)\in\mathbb R^2$, the projection of $(u_1,u_2)$ onto $C$ is given by
			\begin{equation}\label{eq:proj_form}
				P_C(u_1,u_2)=
				\begin{cases}
					(u_1,u_2), & u_1\geq |u_2|,\\[1mm]
					(0,0), & u_1\leq -|u_2|,\\[1mm]
					\dfrac{u_1+|u_2|}{2}(1,\operatorname{sgn} u_2), & |u_1|<|u_2|
					.
				\end{cases}
			\end{equation}
			 By Theorem~\ref{flow_invariant_A}, if $x(t)=(x_1(t),x_2(t))$ is the solution of~\eqref{eq:ode_start} with $x(0)=x^0$, then $x(t)\in C$ for all $t\geq 0$, i.e., $x_1(t)\ge|x_2(t)|$ for all $t\in\R_+$. 
			 
			 Let~$u^0=(u_1^0, u_2^0)=(\gamma x_1^0, \beta x_2^0)$. We consider the following three cases.
			
			\medskip
			\noindent\textbf{Case 1: $u_1^0\leq -|u_2^0|$}. Take
			\begin{equation}\label{eq:sol_formula_1}
				x(t)=x^0e^{-t/\eta},\quad \text{for all} \; t\geq 0.
			\end{equation}
			Thanks to the solution uniqueness stated in Theorem~\ref{global_sol_A}, we can use~\eqref{eq:proj_form} to verify that $x(\cdot)$ given by~\eqref{eq:sol_formula_1} is the solution of~\eqref{eq:ode_start} with $x(0)=x^0$. From~\eqref{eq:sol_formula_1} it follows that
			$$
			\lim_{t\to+\infty}x(t)=(0,0)\in C_*.
			$$
				
			\medskip
	
			\noindent\textbf{Case 2: $|u_1^0|<|u_2^0|$}. Recall that $u^0=(u_1^0, u_2^0)=(\gamma x_1^0, \beta x_2^0)$. Since $x^0\in C$, we see that if $x_1^0=0$, then $x_2^0=0$. This implies that $u_1^0= u_2^0 =0$, which is excluded in the case under consideration. So, we must have $x^0_1>0$.
			
			If $x_2^0 \geq 0$, then $|\gamma x_1^0| < |\beta x_2^0| = \beta x_2^0$. Let $\mu_1 = \dfrac{\gamma x_1^0+\beta x_2^0}{2}$ and 
			\begin{equation}\label{eq:ex_sol1}
				x_1(t)=\mu_1+\frac{\beta (x_1^0-x_2^0)}2e^{-t/\eta},\qquad x_2(t)= \mu_1-\frac{\gamma (x_1^0-x_2^0)}2e^{-t/\eta},
			\end{equation}
			for $t\in\R_+$. Set $\theta(t) = e^{-t/\eta}$. Then, $\theta(t) \in (0, 1]$ for all $t \in \mathbb{R}_+$. By~\eqref{eq:ab_identities} we get
			\begin{align*}
				x_1(t) = \mu_1 + \frac{\beta(x_1^0 - x_2^0)}{2}\theta(t) 
				&= (1 - \theta(t))\mu_1 + \dfrac{\gamma +\beta}{2}\theta(t) x_1^0\\
				&= (1 - \theta(t))\mu_1 + \theta(t) x_1^0
			\end{align*}
			and 	
			\begin{align*}
				x_2(t) = \mu_1 - \frac{\gamma(x_1^0 - x_2^0)}{2}\theta(t) 
				&= (1 - \theta(t))\mu_1 + \dfrac{\gamma +\beta}{2}\theta(t) x_2^0\\
				&= (1 - \theta(t))\mu_1 + \theta(t) x_2^0.
			\end{align*}
			It follows that
			$$
			\gamma x_1(t) = (1 - \theta(t)) \gamma \mu_1 + \theta(t) (\gamma x_1^0)\quad\text{and}\quad \beta x_2(t) = (1 - \theta(t)) \beta \mu_1 + \theta(t) (\beta x_2^0).
			$$
			Thus, 
			\begin{equation}\label{eq:beta_alpha}
				\begin{aligned}
					\beta x_2(t) - |\gamma x_1(t)|& = (1 - \theta(t)) \beta \mu_1 + \theta(t) (\beta x_2^0) - \big|(1 - \theta(t)) \gamma \mu_1 + \theta(t) (\gamma x_1^0) \big| \\&\geq (1 - \theta(t)) (\beta \mu_1 - |\gamma \mu_1|) + \theta(t) (\beta x_2^0 - |\gamma x_1^0|).
				\end{aligned}
			\end{equation}
			As $|\gamma x_1^0| < \beta x_2^0$, one gets $\gamma x_1^0 > - \beta x_2^0$. Hence,
			$$\mu_1 = \frac{\gamma x_1^0 + \beta x_2^0}{2} > \frac{-\beta x_2^0 + \beta x_2^0}{2} = 0.$$
			Therefore, from~\eqref{eq:ab_identities_new} it follows that
			$$\beta \mu_1 - |\gamma \mu_1| = \beta \mu_1 - |\gamma| \mu_1 = (\beta - |\gamma|)\mu_1 > 0.
			$$
			Since $\beta \mu_1 - |\gamma \mu_1| > 0$, $\beta x_2^0 - |\gamma x_1^0| > 0$, and $\theta(t) \in (0, 1]$ for all $t \in \mathbb{R}_+$, by~\eqref{eq:beta_alpha} we have
			$$\beta x_2(t) - |\gamma x_1(t)| > 0 
			$$
			for all $t \in \mathbb{R}_+$. So, $|\gamma x_1(t)| < \beta x_2(t)$ for all $t\in \R_+$. Since $\beta\in(1,+\infty)$, this implies that $x_2(t) >0$ for all $t\in\R_+$. Hence, by~\eqref{eq:proj_form} we get
			$$P_C\big(z(x(t))\big)= P_C(\gamma x_1(t), \beta x_2(t))=\dfrac{\gamma x_1(t)+\beta x_2(t)}{2}(1,1).$$ 
			This allows us to rewrite~\eqref{eq:ode_start} as
			$$\dot{x}(t)=\dfrac{1}{\eta} \left[\dfrac{\gamma x_1(t)+\beta x_2(t)}{2}(1,1)-x(t)\right].$$
			We can easily check that $x(\cdot)$ given by~\eqref{eq:ex_sol1} is the solution of~\eqref{eq:ode_start} with $x(0)=x^0$. So,~\eqref{eq:ex_sol1} implies that
			$$
			\lim_{t\to+\infty}x(t)=(\mu_1,\mu_1)\in E_+\subset C_*.	
			$$
			
		If $x_2^0 < 0$, then $|\gamma x_1^0| < |\beta x_2^0| = -\beta x_2^0$. Let $\mu_2 = \dfrac{\gamma x_1^0 - \beta x_2^0}{2}$ and define
		\begin{equation}\label{eq:ex_sol1_p2}
			x_1(t) = \mu_2 + \big(x_1^0 - \mu_2\big)e^{-t/\eta}, \qquad x_2(t) = -\mu_2 + \big(x_2^0 + \mu_2\big)e^{-t/\eta}.
		\end{equation}
	Put $\theta(t) = e^{-t/\eta}$ for $t \in \mathbb{R}_+$. We have
		$$
		x_1(t) = (1-\theta(t))\,\mu_2 + \theta(t)\,x_1^0, \qquad x_2(t) = -(1-\theta(t))\,\mu_2 + \theta(t)\,x_2^0.
		$$
		Therefore,
		$$
		\gamma x_1(t) = (1-\theta(t))\,\gamma \mu_2 + \theta(t)\,(\gamma x_1^0)
		$$
		and
		$$
		-\beta x_2(t) = (1-\theta(t))\,\beta \mu_2 + \theta(t)\,(-\beta x_2^0).
		$$
		Thus,
		\begin{equation}\label{eq:beta_alpha_p2}
			\begin{aligned}
				-\beta x_2(t) - |\gamma x_1(t)| &= (1-\theta(t))\,\beta \mu_2 + \theta(t)\,(-\beta x_2^0) - \big|(1-\theta(t))\,\gamma \mu_2 + \theta(t)\,(\gamma x_1^0)\big| \\
				&\geq (1-\theta(t))\big(\beta \mu_2 - |\gamma \mu_2|\big) + \theta(t)\big({-\beta x_2^0} - |\gamma x_1^0|\big).
			\end{aligned}
		\end{equation}
		Since $|\gamma x_1^0| < -\beta x_2^0$, one gets $\gamma x_1^0 > \beta x_2^0$. Hence,
		$$
		\mu_2 = \frac{\gamma x_1^0 - \beta x_2^0}{2} > \frac{\beta x_2^0 - \beta x_2^0}{2} = 0.
		$$
		Thus, it follows from~\eqref{eq:ab_identities_new} that
		$$
		\beta \mu_2 - |\gamma \mu_2| = \beta \mu_2 - |\gamma|\mu_2 = (\beta - |\gamma|)\mu_2 > 0.
		$$
		Since both $\beta \mu_2 - |\gamma \mu_2| > 0$ and $-\beta x_2^0 - |\gamma x_1^0| > 0$, and $\theta(t) \in (0,1]$ for all $t \in \mathbb{R}_+$, by \eqref{eq:beta_alpha_p2} we have
		$$
		-\beta x_2(t) - |\gamma x_1(t)| > 0
		$$
		for all $t \in \mathbb{R}_+$. So, $|\gamma x_1(t)| < -\beta x_2(t)$ for every $t \in \mathbb{R}_+$. As $\beta \in (1,+\infty)$, this implies $x_2(t) < 0$ for all $t \in \mathbb{R}_+$. Hence, by \eqref{eq:proj_form} we get
		$$
		P_C\big(z(x(t))\big) = P_C\big(\gamma x_1(t), \beta x_2(t)\big) = \frac{\gamma x_1(t) - \beta x_2(t)}{2}(1,-1).
		$$
		We can easily check that $x(\cdot)$ given by \eqref{eq:ex_sol1_p2} is the solution of \eqref{eq:ode_start}. So, we obtain
		$$
		\lim_{t\to+\infty} x(t) = (\mu_2, -\mu_2) \in E_- \subset C_*.
		$$
			
			\medskip
			\noindent\textbf{Case 3: $u_1^0\geq |u_2^0|$}. Since $u^0=(u_1^0, u_2^0)=(\gamma x_1^0, \beta x_2^0)$, if $x_1^0=0$ then we have $x_2^0=0$; hence $u_1^0= u_2^0 =0$. This situation has been considered in Case~1. So,  we may assume that $x^0_1>0$.
			
			If $0<\rho<2$, then $\gamma<0$. So, $u_1^0=\gamma x_1^0\leq 0\leq |u_2^0|$, with equality only if $x_2^0=0$ and $\gamma x_1^0=0$. Since $x_1^0>0$, this forces $\gamma=0$, which contradicts the fact that $\gamma<0$. Hence, this situation occurs only when $\rho\ge2$, i.e., $\gamma\geq 0$. 	Set $\sigma_0=\dfrac{x^0_2}{x^0_1}\in\left[-\dfrac{\gamma}{\beta},\dfrac{\gamma}{\beta}\right]$.
			
			\smallskip
			If $\sigma_0=0$, i.e., $x_2^0=0$, we can check that
			$$
			x(t):=\big(x_1^0e^{(\gamma-1)t/\eta},\,0\big),\qquad t\ge0,
			$$
			solves~\eqref{eq:ode_start} and $x(0)=x^0$. Since $\gamma<1$,
			$$
			\lim_{t\to+\infty}x(t)=(0,0)\in  C_*.
			$$
			
			\smallskip
			If  $0<\sigma_0\leq\dfrac{\gamma}{\beta}$, then define
			$$
			T_1=\frac\eta{\beta-\gamma}\ln\dfrac{\gamma}{\sigma_0\beta}
			$$
			and
			$$
			c_1:=\gamma x_1^0e^{(\gamma-1)T_1/\eta},\qquad c_2:=x_1^0e^{(\gamma-1)T_1/\eta}-x_2^0e^{(\beta-1)T_1/\eta}.
			$$
			Let
			\begin{equation}\label{eq:ex_sol2}
				x(t) = 
				\begin{cases}
					\Big(x_1^0e^{(\gamma-1)t/\eta},\ x_2^0e^{(\beta-1)t/\eta}\Big) \quad &\text{if}\;0\leq t\leq T_1,\\[2mm]
					\Big(c_1+\dfrac{\beta c_2}2e^{-(t-T_1)/\eta},\ \ c_1-\dfrac{\gamma c_2}2e^{-(t-T_1)/\eta}\Big) & \text{if}\;t > T_1.
				\end{cases}
			\end{equation}
			We see that $x(\cdot)$ is the unique solution of~\eqref{eq:ode_start} with $x(0)=x^0$. Indeed, we observe that $x(\cdot)$ is continuously differentiable on $\R_+$. For every $t\in[0,T_1]$, we have $$\gamma x_1(t)\geq\beta x_2(t)=|\beta x_2(t)|>0.$$ So, by~\eqref{eq:proj_form} one gets
			$$
			P_C\big(z(x(t))\big)=z(x(t))=(\gamma x_1(t),\beta x_2(t)),\qquad t\in[0,T_1].
			$$
			Since $\dot x_1(t)=\tfrac1\eta(\gamma-1)x_1(t)$ and $\dot x_2(t)=\tfrac1\eta(\beta-1)x_2(t)$ by direct differentiation, this is exactly $\dot x(t)=\tfrac1\eta[P_C(z(x(t)))-x(t)]$ on $[0,T_1]$. 
			
			For every $t > T_1$, we can infer that $|\gamma x_1(t)|<\beta x_2(t)$. By~\eqref{eq:proj_form}, for any $t> T_1$ this gives
			$$
			P_C\big(z(x(t))\big)=\dfrac{\gamma x_1(t)+\beta x_2(t)}{2}(1,1).
			$$
			It is easy to verify that $x(t)$ satisfies~\eqref{eq:ode_start} for $t>T_1$. Hence, the function $x(\cdot)$ given in~\eqref{eq:ex_sol2} is the solution of~\eqref{eq:ode_start} with the initial value $x^0=(x_1^0,x_2^0)$. Therefore,
			$$	\lim_{t\to+\infty}x(t) = (c_1,c_1) \in E_+\subset C_*.$$
			If $-\dfrac{\gamma}{\beta}\le \sigma_0<0$, then $x_2^0<0$. Define
			$$
			T_2:=\frac\eta{\beta-\gamma}\ln\dfrac{\gamma}{-\sigma_0\beta},
			$$
			and
			$$
			c_1:=\gamma x_1^0e^{(\gamma-1)T_2/\eta},\qquad c_2:=x_1^0e^{(\gamma-1)T_2/\eta}+x_2^0e^{(\beta-1)T_2/\eta}.
			$$
			Let
			\begin{equation*}\label{eq:ex_sol2_neg}
				x(t) = 
				\begin{cases}
					\Big(x_1^0e^{(\gamma-1)t/\eta},\ x_2^0e^{(\beta-1)t/\eta}\Big) \quad &\text{if}\;0\le t\le T_2,\\[2mm]
					\Big(c_1+\dfrac{\beta c_2}2e^{-(t-T_2)/\eta},\ \ -c_1+\dfrac{\gamma c_2}2e^{-(t-T_2)/\eta}\Big) & \text{if}\;t > T_2.
				\end{cases}
			\end{equation*}
			Arguing similarly as above, we obtain $x(\cdot)$ is continuously differentiable on $\mathbb{R}_+$. Therefore,
			$$
			\lim_{t\to+\infty}x(t) = (c_1,-c_1) \in E_-\subset C_*.
			$$
			
			We have seen that the  first two assertions of Theorem~\ref{thm1} are valid in each of the above three cases.
		}
	\end{example}

In Example~\ref{ex:1}, any trajectory of the dynamical system~\eqref{dynamic_sys_A} converges to a KKT point of~\eqref{QP problem}. This motivates the following open question:

\noindent\textit{Question 1: Does the unique trajectory $x(\cdot)$ of~\eqref{dynamic_sys_A} always have a cluster point?}

\subsection{Second example}
This example shows that the method of dynamical system, which is implemented in Algorithm~\ref{ContIQP} below, may outperform the well known DCA method.

\begin{algorithm}[H]
	\caption{ContIQP: Continuous method for solving IQPs}
	\label{ContIQP}
	\begin{algorithmic}[1]
		\Require Initial state $x_0 \in C$, time step $\Delta t > 0$, tolerance $\mathrm{Tol} > 0$, and constants $\eta > 0$ and $\rho > 0$
		\State Initialize $x \gets x_0$ and $t \gets 0$
		\Statex
		\While{$\|\dot{x}(t)\| \geq \mathrm{Tol}$}
		\State Solve the ODE~\eqref{dynamic_sys_A} over the interval $[t,\, t+\Delta t]$ with initial condition $x$ to obtain $x_{\mathrm{next}}$
		\State $x \gets x_{\mathrm{next}}$
		\State $t \gets t + \Delta t$
		\EndWhile
		\Statex
		\State \Return $x$ and $t$ 
	\end{algorithmic}
\end{algorithm}

\begin{example}\label{eg3.1} {\rm Consider problem~\eqref{QP problem} with $n=2, m = 4$, $$Q=\begin{bmatrix} 
			-3&2 \\ 2&-1
		\end{bmatrix},\ \; A=\begin{bmatrix}
			1&0 \\ 
			-1&0\\
			0&1\\
			0&-1
		\end{bmatrix},\ \; q=\begin{pmatrix}
			-2\\
			-2 
		\end{pmatrix},\ \; b=\begin{pmatrix}
			-2\\	
			-2\\
			-2\\
			-2
		\end{pmatrix}.$$	
	In other words, we consider the problem of minimizing the objective function $$f(x_1, x_2) = -\dfrac{3}{2}x_1^2 + 2x_1x_2 - \dfrac{1}{2}x_2^2 - 2x_1 - 2x_2$$ over the constraint set $$C=\big\{x\in\mathbb R^2 \mid -2\leq x_1\leq 2,\, -2\leq x_2\leq 2\big\}.$$ 
		Applying~\eqref{KKT_Point_Set}, we find that the set of KKT points is $$C_* = \left\{ \left(\dfrac{2}{3}, 2 \right), (2, 2), (2, -2), (-2, 2) \right\}.$$ Among these points, the global solutions are $(-2,2)$ and $(2,-2)$. 
		
		\begin{figure}[!ht]
			\centering
			\includegraphics[width=0.6\textwidth]{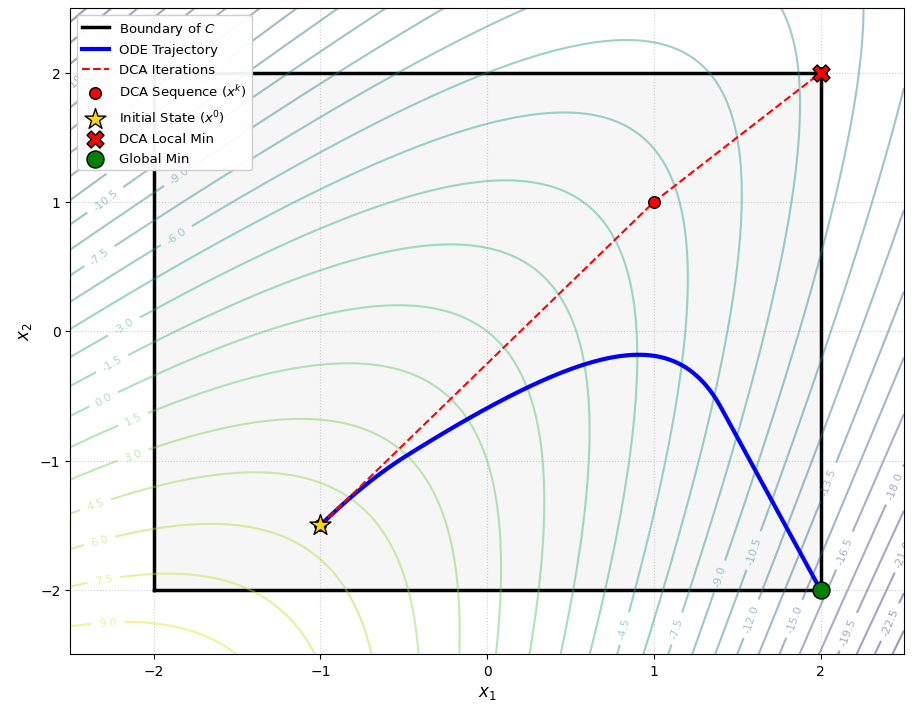} 
			\caption{Illustration for Example~\ref{eg3.1}}\label{fig1}
		\end{figure}
		
		We choose the initial point $
		x^0=\left(-1,-\dfrac{3}{2}\right),$
		and set the tolerance to ${\rm Tol}=10^{-6}$ with parameters $\eta=0.5$ and $\rho=1$. The corresponding numerical results are shown in Figure~\ref{fig1}. It can be seen that the classical DCA converges to the KKT point $(2,2)$, which is not a global solution.}
\end{example} 

\begin{remark}
	It is worthy to note that the discrete DCA scheme and its continuous-time
	counterpart need not converge to the same critical point when $Q$ is
	indefinite. The discrete iteration $x^{k+1} = F(x^k),$
where $F(x) := P_C\!\left(x - \tfrac{1}{\rho}\nabla f(x)\right)$, advances by  $F(x^k) - x^k$ at each step, its
	trajectory can traverse a substantial fraction of the feasible region $C$ in a
	single iteration and
	admitting convergence to whichever stationary point lies along the direction.
		
	On the other hand, the continuous scheme $
	\dot{x}(t) = \tfrac{1}{\eta}\big(F(x(t)) - x(t)\big)$
	behaves differently in a way that follows directly from the variational
	characterization of $F$. Because the vector field is integrated with a step size $\Delta t$ several orders of magnitude smaller than the diameter of $C$, the trajectory continuously descends through the constraint set rather than leaping across it.
\end{remark}

\section{Concluding Remarks}\label{Sect-5}
In this paper, we have investigated a continuous-time dynamical system for solving indefinite quadratic programming problems under linear constraints. We established the asymptotic convergence of the generated trajectory to the Karush-Kuhn-Tucker set of the problem. We also derived an estimate of the distance between the trajectory and a solution of the IQP. Furthermore, we showed that the objective function is decreasing along the trajectory. Illustrative examples were presented to demonstrate the convergence properties and numerical performance of the method.

Several directions deserve further investigation. An important topic is the development of efficient numerical discretizations of the continuous dynamical system and a comparison of their performance with existing DC algorithms for indefinite quadratic programming.

It would also be of interest to analyze, in the same manner, the second dynamical system proposed in~\cite{pty_2025}.

Finally, it is natural to ask whether it is possible to construct a continuous dynamical system whose discretization yields a discrete scheme with an inertial effect.

\end{document}